\documentclass{article}

\usepackage{graphicx}
\usepackage{cite}
\usepackage{authblk}
\usepackage{amsthm}
\usepackage{amsmath}
\usepackage{amssymb}
\usepackage[left=3cm, right=3cm, lines=45, top=1.0in, bottom=1.0in]{geometry}
\usepackage[colorlinks=true]{hyperref}

\date{}
\title{Brunn--Minkowski Inequality for the First Complex $\sigma_{2}$-Hessian Eigenvalue}
\author{Chuanqiang Chen, Jiahuan Li, Xi-Nan Ma}

\newcommand{\keywords}[1]{\par\quad\textbf{Keywords:} #1}

\begin{document}

\maketitle
\newtheorem{theorem}{Theorem}[section]
\newtheorem{definition}[theorem]{Definition}
\newtheorem{lemma}[theorem]{Lemma}
\newtheorem{corollary}[theorem]{Corollary}
\newtheorem{example}[theorem]{Example}
\newtheorem{proposition}[theorem]{Proposition}
\newtheorem{conjecture}[theorem]{Conjecture}
\newtheorem{remark}[theorem]{Remark}
\newtheorem{assumption}[theorem]{Assumption}

\begin{abstract}
There are relatively few results on the convexity of solutions to complex equations. In this paper, We prove a strict real log-concavity theorem for the first eigenfunction of the complex $\sigma_{2}$-Hessian operator on smooth, bounded, real uniformly strictly convex domains in $\mathbb{C}^{n}$. As an application, we obtain a Brunn--Minkowski inequality for the first complex $\sigma_{2}$-Hessian eigenvalue. The proof combines a Bian--Guan constant-rank argument, a new inverse-convexity lemma for the compressed real Hessian, and Salani's viscosity admissible-test-function method.
\end{abstract}

\keywords{Complex Hessian equations, Brunn--Minkowski inequality, log-concavity, constant rank theorem, inverse convexity}

\numberwithin{equation}{section}

\section{Introduction}

Convexity properties of solutions of elliptic equations play a central role in the interaction between partial differential equations and convex geometry. A classical starting point is the result of Brascamp and Lieb \cite{brascamp1976extensions}, which implies the log-concavity of the first Dirichlet eigenfunction of the Laplacian on convex domains and leads to a Brunn--Minkowski inequality for the first Laplace eigenvalue. Since then, many analogous questions have been studied for nonlinear elliptic equations, especially for Hessian equations and Monge--Amp$\grave{e}$re type equations; see for instance \cite{colesanti2005brunn,salani2005monge,salani2012convexity,limasa} and the references therein.

For real Hessian equations, one considers
\begin{equation}\label{eq:real-hessian}
\sigma_{k}(D^{2}u)=\lambda(\Omega)(-u)^{k},\qquad
u<0\ \text{in }\Omega,\qquad
u=0\ \text{on }\partial\Omega,
\end{equation}
where $\sigma_{k}$ denotes the $k$-th elementary symmetric function of the eigenvalues of the real Hessian. The admissible branch is the G{\aa}rding cone $\Gamma_{k}$. Wang \cite{wang1994class} established existence and uniqueness, up to positive multiples, of the first eigenfunction for the real $k$-Hessian problem in smooth convex domains. The Brunn--Minkowski theory for the corresponding eigenvalue is known for $k=1$ and $k=n$, and for intermediate $k$ only in special cases. In particular, the real $\sigma_{2}$ case is delicate. Liu--Ma--Xu and Salani treated the three-dimensional case by using convexity of suitable power or logarithmic transforms and the Minkowski addition of convex functions \cite{liu2010brunn,salani2012convexity}. A higher-dimensional proof for the real $\sigma_{2}$ eigenvalue relies on a constant-rank theorem and a refined matrix inverse-convexity estimate.

The present paper develops the analogous theory for the complex $\sigma_{2}$-Hessian eigenvalue. Let $\Omega\subset\mathbb{C}^{n}$ be a bounded smooth domain. For a real-valued function $u$, write
\[
(u_{i\bar{j}})_{1\leq i,j\leq n}
\]
for its complex Hessian. We study the first eigenvalue problem
\begin{equation}\label{eq:complex-eigen}
\sigma_{2}(u_{i\bar{j}})=\lambda_{\mathbb{C},2}(\Omega)(-u)^{2},\qquad
u<0\ \text{in }\Omega,\qquad
u=0\ \text{on }\partial\Omega,
\end{equation}
with $A_{u}\in\Gamma_{2}$. The corresponding Rayleigh quotient is
\begin{equation}\label{eq:rayleigh}
\lambda_{\mathbb{C},2}(\Omega)
=
\inf_{\varphi\in\mathcal{A}_{2}(\Omega)}
\frac{\displaystyle\int_{\Omega}(-\varphi)\sigma_{2}(\varphi_{i\bar{j}})\,dV}
{\displaystyle\int_{\Omega}(-\varphi)^{3}\,dV},
\end{equation}
where $\mathcal{A}_{2}(\Omega)$ denotes the class of negative $2$-admissible functions vanishing on the boundary. For the spectral theory of complex Hessian operators we use the variational existence theory of Badiane--Zeriahi \cite{badiane2023variational}, the $C^{1,1}$-regularity and uniqueness theorem of Chu--Liu--McCleerey \cite{chu2024eigenvalue}, and standard interior regularity for strictly admissible complex Hessian equations; see also \cite{dinew2011apriori,dinew2012liouville,hou2010second,zdk}.

Our first main result is the strict real log-concavity of the first eigenfunction.

\begin{theorem}\label{thm:log-concavity}
Let $\Omega\subset\mathbb{C}^{n}$, $n\geq2$, be a bounded, smooth, real uniformly strictly convex domain. Let $u$ be the first $2$-admissible eigenfunction of \eqref{eq:complex-eigen}. Then
\[
v=-\log(-u)
\]
is strictly convex as a function on the real domain $\Omega\subset\mathbb{R}^{2n}$.
\end{theorem}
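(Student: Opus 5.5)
We argue by the classical deformation (continuity) method, combined with a constant-rank theorem and a boundary analysis, as the abstract suggests.

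\emph{Equation for $v$.} With $u=-e^{-v}$ we have $u_{i\bar j}=e^{-v}(v_{i\bar j}-v_iv_{\bar j})$. Hence \eqref{eq:complex-eigen} becomes
\[
\sigma_2\bigl(v_{i\bar j}-v_iv_{\bar j}\bigr)=\lambda_{\mathbb{C},2}(\Omega),
\]
with $v\to+\infty$ at $\partial\Omega$. The unknown is the full real Hessian $D^2v$ on $\mathbb{R}^{2n}$, and the operator only sees its $J$-invariant part. Strict convexity of $v$ means $D^2v>0$, which is strictly stronger than positivity of $(v_{i\bar j})$.

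\emph{Step 1: near the boundary.} Since $\Omega$ is real uniformly strictly convex, the Hopf lemma and $C^{1,1}$ regularity (Chu--Liu--McCleerey) give $|\nabla u|>0$ near $\partial\Omega$. One then computes
\[
D^2v=\frac{D^2u}{-u}+\frac{\nabla u\otimes\nabla u}{u^2}.
\]
The second term dominates in the normal direction. In tangential directions, $D^2u$ is comparable to $|\nabla u|$ times the second fundamental form, which is positive. So $D^2v>0$ in a collar $\{-\delta<u<0\}$; this step requires boundary regularity of $u$ up to second order near $\partial\Omega$.

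\emph{Step 2: deformation.} Connect $\Omega$ to a ball $\Omega_0$ through a family $\Omega_t=(1-t)\Omega_0+t\Omega$ of uniformly convex domains. On a ball, radial symmetry and uniqueness reduce $u$ to an ODE, and strict log-concavity can be checked directly. The eigenfunctions $u_t$ depend continuously on $t$ by uniqueness up to normalization, and the set of $t$ with $D^2v_t>0$ is open. If it were not closed, there would be a first $t$ at which $D^2v_t\ge0$ is degenerate at some interior point; by Step 1 that point stays away from the boundary.

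\emph{Step 3: constant rank (main obstacle).} We must show that $D^2v\ge0$ on a connected domain forces $D^2v$ to have constant rank. Applied at the first degenerate $t$, this contradicts the strict positivity near the boundary from Step 1, so $D^2v_t>0$ at every $t$ and in particular at $t=1$.

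To prove constant rank we follow Bian--Guan. Let $l$ be the minimal rank and set $\phi=\sigma_{l+1}(D^2v)+\sigma_{l+2}/\sigma_{l+1}$. The goal is a differential inequality $L\phi\le C(\phi+|\nabla\phi|)$, where $L$ is the linearized complex operator. The strong maximum principle then gives $\phi\equiv0$.

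The difficulty is that the equation is rewritten through the inverse-type structure $F(A)=-\sigma_2(\cdot)^{-1}$ acting on the compressed real Hessian, i.e.\ $D^2v$ restricted to and projected from the $J$-invariant part. Third-derivative terms in the "bad" directions must be absorbed using a concavity property of the map $W\mapsto -f(W^{-1})$. Here I would invoke the paper's inverse-convexity lemma for the compressed real Hessian, stated as a matrix inequality for $\sigma_2$ in the complex setting, to get the needed sign on $\sum F^{ab,cd}$ terms plus the gradient terms coming from $-v_iv_{\bar j}$.

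The gradient terms need separate care. They are controlled because their first-order contributions vanish in the kernel directions of $D^2v$, up to $O(\phi+|\nabla\phi|)$.

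Alternatively, if the smoothness along the deformation is problematic, I would instead use Salani's viscosity approach. One shows that the Minkowski-type sup-convolution of $v$ is a viscosity supersolution, via an admissible test function argument, and then applies the comparison principle. The constant-rank step would then upgrade the resulting convexity to strict convexity.
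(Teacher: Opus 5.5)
Your overall architecture is the same as the paper's: the transform to $\sigma_2(v_{i\bar j}-v_iv_{\bar j})=\lambda$, the radial ODE on a ball, a Hopf-based boundary collar, the Minkowski deformation from a ball, and closedness via a Bian--Guan constant-rank theorem. The gap is in Step~3, which is where the new content of the proof lives. You never state or prove the structural condition that makes Bian--Guan applicable to this operator, and the form you do gesture at is false.

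You propose the normalization $F=-\sigma_2(\cdot)^{-1}$ and a convexity property of $A\mapsto F(A^{-1})$ to sign the $F^{ab,cd}$ terms. But already for $q=0$, the map $A\mapsto-\sigma_2\bigl(C(A^{-1})\bigr)^{-1}$ is negative and positively homogeneous of degree $2$ on $S^{2n}_{++}$. It is therefore strictly concave along every ray $s\mapsto sA$, so it is not convex. For $q\neq0$, the rank-one formula gives $\sigma_2\bigl(C(A^{-1})-qq^{*}\bigr)=\sigma_2(C)-|q|^2\operatorname{tr}(PC)$. This is a convex function of $A$ plus the concave function $-|q|^2\operatorname{tr}(\mathcal{P}A^{-1})$, so there is no reason to expect convexity, and the paper neither proves nor uses it. Your remark that first-order gradient contributions vanish in kernel directions is correct, and it is why one may freeze $p$. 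But freezing $p$ does not remove the parameter $q$ from this matrix function, and that dependence is exactly the difficulty.

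What holds, and what the paper proves in Theorem~\ref{thm:rank-one}, is only level-set convexity with the gradient frozen. For every $q\in\mathbb{C}^n$ and $\lambda>0$, the set $K_{q,\lambda}=\{A\in S^{2n}_{++}:\sigma_2(C(A^{-1})-qq^{*})\le\lambda\}$ is convex. The argument runs as follows:
\begin{itemize}
\item Membership is rewritten as $J_\lambda(A)=\sigma_2(C)/\operatorname{tr}(PC)-\lambda/\operatorname{tr}(PC)\le|q|^2$.
\item Using $\operatorname{tr}(PC)\operatorname{tr}(\Theta C)-\operatorname{tr}(PC\Theta C)=\sigma_2(C)$, one identifies $\operatorname{tr}(PC)/\sigma_2(C)=g/D_{\mathcal{R}}g$ with $g=D_{\mathcal{P}}\det$, and applies G{\aa}rding's quotient concavity.
\item The concavity of $A\mapsto 1/\operatorname{tr}(\mathcal{P}A^{-1})$ handles the $\lambda$-term.
\item The case $q=0$ follows by intersecting the convex sets $K_{\varepsilon e_n,\lambda}$.
\end{itemize}
This quasi-convexity is fed into the level-set form of Bian--Guan, Theorem~\ref{thm:BG}, together with ellipticity from $T_1(N)>0$ and the nondegeneracy $F(0,p)=-\lambda\neq0$. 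Without it, your inequality $L\phi\le C(\phi+|\nabla\phi|)$ is unsupported exactly at the step that matters.

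Two smaller points. First, as you note yourself, your Step~1 computation uses continuity of $D^2u$ up to $\partial\Omega$, which $u\in C^{1,1}(\bar\Omega)$ does not provide; the paper instead appeals to the Caffarelli--Friedman/Korevaar boundary lemma. Second, the Salani-type alternative you mention rests on the same kind of inverse-convexity input, so it does not bypass the gap.
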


Here real uniformly strictly convex means that the second fundamental form of $\partial\Omega$, considered as a hypersurface in $\mathbb{R}^{2n}$, is positive definite with a uniform positive lower bound. Since $\partial\Omega$ is compact, this is automatic if the boundary is $C^{2}$ and the real second fundamental form is positive at every boundary point.

The proof of Theorem \ref{thm:log-concavity} follows the classical Caffarelli--Friedman--Korevaar--Lewis deformation strategy. The key new point is the complex inverse-convexity structure needed for the Bian--Guan constant-rank theorem. Under the logarithmic transform $u=-e^{-v}$, equation \eqref{eq:complex-eigen} becomes
\begin{equation}\label{eq:transformed-intro}
\sigma_{2}(v_{i\bar{j}}-v_{i}v_{\bar{j}})=\lambda_{\mathbb{C},2}(\Omega).
\end{equation}
Thus the relevant operator contains the rank-one term $\partial v\otimes\overline{\partial v}$. We prove that, for every fixed complex vector $q$ and every $\lambda>0$, the set
\begin{equation}\label{eq:intro-level}
\left\{
A\in S^{2n}_{++}:
\sigma_{2}\bigl(C(A^{-1})-qq^{*}\bigr)\leq\lambda
\right\}
\end{equation}
is convex. Here $C$ is the linear map which extracts the complex Hessian part from a real Hessian. This level-set convexity verifies the Bian--Guan structural condition \cite{bian2009microscopic,bian2010structural}. Once convexity of $v$ is known, the constant-rank theorem implies that $D^{2}_{\mathbb{R}}v$ has constant rank. Boundary strict convexity and a deformation from the ball then yield strict convexity everywhere.

The second main result is a Brunn--Minkowski inequality for the eigenvalue.

\begin{theorem}[Brunn--Minkowski inequality]\label{thm:BM}
Let $\Omega_{0},\Omega_{1}\subset\mathbb{C}^{n}$, $n\geq2$, be bounded, smooth, real uniformly strictly convex domains and let $t\in[0,1]$. Set
\[
\Omega_{t}=(1-t)\Omega_{0}+t\Omega_{1}.
\]
Then
\begin{equation}\label{eq:BM}
\lambda_{\mathbb{C},2}(\Omega_{t})^{-1/4}
\geq
(1-t)\lambda_{\mathbb{C},2}(\Omega_{0})^{-1/4}
+t\lambda_{\mathbb{C},2}(\Omega_{1})^{-1/4}.
\end{equation}
\end{theorem}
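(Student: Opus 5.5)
We deduce the inequality from Theorem \ref{thm:log-concavity} with the Borell--Colesanti--Salani scheme: build a sub-solution on $\Omega_t$ by Minkowski (sup-)convolution of the logarithmic transforms of the eigenfunctions on $\Omega_0,\Omega_1$, and then compare. The exponent $-1/4$ comes from homogeneity: under the real dilation $x\mapsto sx$, $\lambda_{\mathbb{C},2}(s\Omega)=s^{-4}\lambda_{\mathbb{C},2}(\Omega)$, since $\sigma_2$ has degree $2$ in second derivatives. Set $\lambda_j=\lambda_{\mathbb{C},2}(\Omega_j)$ and rescale $\Omega_j$ so that $\lambda_0=\lambda_1$. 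Then \eqref{eq:BM} for general domains follows from the normalized inequality $\lambda_{\mathbb{C},2}(\Omega_t)\le \lambda_0$ applied to suitably weighted dilates, as in the classical reduction. Indeed, with $a_j=\lambda_j^{-1/4}$, we write $\Omega_t$ as a convex combination with weights $\propto (1-t)a_0,\,ta_1$ of the normalized domains $\Omega_j/a_j$, each of which has eigenvalue $1$.

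Let $u_j$ be the first eigenfunctions and $v_j=-\log(-u_j)$. By Theorem \ref{thm:log-concavity}, each $v_j$ is strictly convex on $\Omega_j\subset\mathbb{R}^{2n}$ and blows up at $\partial\Omega_j$. Define the infimal convolution
\[
v_t(z)=\inf\{(1-t)v_0(x)+tv_1(y):\ z=(1-t)x+ty,\ x\in\Omega_0,\ y\in\Omega_1\}.
\]
It is convex on $\Omega_t$ and tends to $+\infty$ at $\partial\Omega_t$. Put $w=-e^{-v_t}$, so that $w<0$ in $\Omega_t$ and $w=0$ on $\partial\Omega_t$. Strict convexity and smoothness let us apply the implicit function theorem: the infimum is attained at a unique interior pair $(x,y)$ depending smoothly on $z$, with $Dv_0(x)=Dv_1(y)=Dv_t(z)=:p$ and
\[
D^2v_t(z)=\bigl((1-t)D^2v_0(x)^{-1}+tD^2v_1(y)^{-1}\bigr)^{-1}.
\]

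The key step, which we expect to be the main obstacle, is to show $\sigma_2(C(D^2v_t)-qq^*)\ge \lambda_0$, where $q$ is the complex gradient determined by $p$ and is common to $x,y,z$. For this we use the inverse-convexity lemma: the set \eqref{eq:intro-level} is convex in the inverse variable. With $B_j=D^2v_j^{-1}$ and $A_j=B_j^{-1}$, we have $\sigma_2(C(A_j)-qq^*)=\lambda_0$. The point is to obtain the \emph{reverse} inequality $\sigma_2\ge\lambda_0$ at the harmonic mean. We therefore need the right formulation: the function
\[
B\mapsto \sigma_2\bigl(C(B^{-1})-qq^*\bigr)^{-1/2}
\]
should be convex, or at least quasi-concave in the correct direction, on the admissible set. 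We will check that the lemma is stated, or can be restated, as concavity/quasi-concavity of $B\mapsto \sigma_2(C(B^{-1})-qq^*)$ on the region where the argument lies in $\Gamma_2$. Combined with equality of the values at $B_0$ and $B_1$, this gives $\ge\lambda_0$ at $(1-t)B_0+tB_1$. If the lemma only provides convexity of the sublevel set $\{\le\lambda\}$, we instead apply it to the complement superlevel structure within the admissible cone. Failing that, we follow Salani's viscosity formulation, testing at points where a smooth $\phi$ touches $v_t$ from below.

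Finally, $w$ is then a (viscosity) subsolution: $\sigma_2(w_{i\bar j})\ge\lambda_0(-w)^2$ with $w=0$ on $\partial\Omega_t$, and $w$ is $2$-admissible because $C(D^2v_t)-qq^*\in\Gamma_2$. Using $w$ as a competitor in \eqref{eq:rayleigh} is awkward in the viscosity setting. We instead use a comparison characterization of the first eigenvalue, available from the uniqueness and $C^{1,1}$ theory of Chu--Liu--McCleerey: the existence of a negative admissible subsolution vanishing on the boundary with constant $\lambda_0$ forces $\lambda_{\mathbb{C},2}(\Omega_t)\le\lambda_0$. This is proved by scaling the true eigenfunction $u_t$ and applying the maximum principle at a first touching point, or by integrating the subsolution inequality against the Rayleigh quotient after regularization. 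This yields the normalized inequality, and undoing the rescaling gives \eqref{eq:BM}.
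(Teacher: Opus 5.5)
There is a genuine gap: your homogeneity reduction is fine, but the core of the argument runs in the wrong direction, and it does so twice.

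\emph{First reversal: the key inequality.} The upper bound $\lambda_{\mathbb{C},2}(\Omega_t)\le\lambda_0$ comes from the Rayleigh quotient \eqref{eq:rayleigh}. That requires a competitor with $\sigma_2(\tilde u_{i\bar j})\le\Lambda(-\tilde u)^2$, i.e. $\sigma_2(C(D^2_{\mathbb{R}}v_t)-qq^*)\le\Lambda$ with $\Lambda=\max\{\lambda_0,\lambda_1\}$. This is exactly what Theorem \ref{thm:rank-one} gives as stated, with no reformulation. Indeed $(D^2_{\mathbb{R}}v_j)^{-1}\in K_{q,\Lambda}$ for $j=0,1$, so the convex combination $(D^2_{\mathbb{R}}v_t)^{-1}$ lies in $K_{q,\Lambda}$.

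The inequality you set out to prove, $\sigma_2(C(D^2_{\mathbb{R}}v_t)-qq^*)\ge\lambda_0$, is false as a matrix statement. Take $n=2$, $q=0$, and $H_0=\mathrm{diag}(1,4,1,4)$, $H_1=\mathrm{diag}(4,1,4,1)$ in $(x,y)$ blocks. Then $\sigma_2(C(H_0))=\sigma_2(C(H_1))=1$. But the harmonic mean at $t=\tfrac12$ is $\tfrac85 I$, and $\sigma_2(C(\tfrac85 I))=\tfrac{16}{25}<1$. So neither convexity of $B\mapsto\sigma_2(C(B^{-1})-qq^*)^{-1/2}$ nor any superlevel-set convexity is available.

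\emph{Second reversal: the comparison principle.} A negative admissible $w$ vanishing on $\partial\Omega_t$ with $\sigma_2(w_{i\bar j})\ge\lambda_0(-w)^2$ cannot force $\lambda_{\mathbb{C},2}(\Omega_t)\le\lambda_0$. The eigenfunction of $\Omega_t$ itself satisfies this inequality for every $\lambda_0\le\lambda_{\mathbb{C},2}(\Omega_t)$. Such subsolutions only yield lower bounds, of Barta type. Your fallback of testing with $\phi$ touching from below can likewise only produce inequalities of the form $\sigma_2(\cdots)\le\cdots$, which is the supersolution direction, not yours.

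\emph{Admissibility.} Your claim that $-e^{-v_t}$ is $2$-admissible is unjustified: nothing places $C(D^2_{\mathbb{R}}v_t)-qq^*$ in $\Gamma_2$ at the harmonic mean. The paper handles this in two steps.
First, it tests only with admissible $\varphi$ touching $v_t$ from below (Lemma \ref{lem:viscosity}). There $C(D^2_{\mathbb{R}}\varphi)-qq^*\in\Gamma_2$ and $D^2_{\mathbb{R}}\varphi\le D^2_{\mathbb{R}}v_t$, so monotonicity of $\sigma_2$ gives $\sigma_2(\varphi_{i\bar j}-\varphi_i\varphi_{\bar j})\le\Lambda$.
Second, it replaces $\tilde u=-e^{-v_t}$ by the admissible solution $\phi_\varepsilon$ of $\sigma_2((\phi_\varepsilon)_{i\bar j})=\Lambda(-\tilde u)^2+\varepsilon$ with $\phi_\varepsilon=0$ on $\partial\Omega_t$. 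The viscosity inequality gives $\phi_\varepsilon\le\tilde u$. Inserting $\phi_\varepsilon$ into the Rayleigh quotient then gives $\lambda_{\mathbb{C},2}(\Omega_t)\le\Lambda+C\varepsilon$, and letting $\varepsilon\to0$ finishes the argument.
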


The exponent $-1/4$ is dictated by the scaling law
\[
\lambda_{\mathbb{C},2}(r\Omega)=r^{-4}\lambda_{\mathbb{C},2}(\Omega),\qquad r>0,
\]
because the complex Hessian has second order and $\sigma_{2}$ is homogeneous of degree two.

The proof of Theorem \ref{thm:BM} is modeled on Colesanti--Salani's infimal-convolution method \cite{colesanti2005brunn,salani2012convexity}. Let $v_{i}=-\log(-u_{i})$ be the transforms of the first eigenfunctions on $\Omega_{i}$. Since the $v_{i}$ are strictly convex by Theorem \ref{thm:log-concavity}, one can define their infimal convolution
\[
w(z)=\inf\{(1-t)v_{0}(x)+tv_{1}(y):z=(1-t)x+ty\}.
\]
At corresponding points the gradients agree and the real Hessian of $w$ is the harmonic mean of the real Hessians of $v_{0}$ and $v_{1}$. The inverse-convexity \eqref{eq:intro-level} then gives the correct pointwise inequality for $w$.

A subtle issue is that $\tilde{u}=-e^{-w}$ need not be classically admissible. Salani's viscosity argument avoids this difficulty: one proves instead that $\tilde{u}$ is a viscosity supersolution when only admissible test functions are used. This is precisely the mechanism used in \cite{salani2012convexity}; see also the viscosity treatment of Hessian equations in \cite{colesanti1999hessian,urbas1990nonclassical}. We include a self-contained admissible-replacement argument below, which avoids inserting a non-admissible function directly into the Rayleigh quotient.
\section*{Acknowledgments}
Chuanqiang Chen was supported by ZJNSF No. LRG25A010002 and NSFC No. 12571219. Xi-Nan Ma and Jiahuan Li were supported by the National Natural Science Foundation of China [grant number 2025YFA1017601]. 

\textbf{Notations.} In this article, we use the following notations.
\begin{itemize}
    \item For a complex-valued function $u$, we write
    $u_{i}:=\frac{\partial u}{\partial z_{i}}$,
    $u_{\bar{j}}:=\frac{\partial u}{\partial \bar{z}_{j}}$, and
    $u_{i\bar{j}}:=\frac{\partial^{2}u}{\partial \bar{z}_{j}\partial z_{i}}$.
    \item The transpose of a matrix $A$ is denoted by $A^T$.
    \item The conjugate transpose of a matrix $A$ is denoted by $A^*$.
    \item $S_{++}^{k}:=\left\{ A\in R^{k\times k},A=A^T,A>0 \right\}$.
    \item $H_{++}^{k}:=\left\{ A\in C^{k\times k},A=A^{*},A>0 \right\}$
\end{itemize}

\section{Preliminaries}

\subsection{Complex Hessians as compressions of real Hessians}

Identify $\mathbb{C}^{n}$ with $\mathbb{R}^{2n}$ by writing
\[
z_{j}=x_{j}+iy_{j},\qquad j=1,\ldots,n.
\]
For a real symmetric $2n\times2n$ matrix
\[
H=
\begin{pmatrix}
U & V\\
V^{T} & W
\end{pmatrix},
\]
where the blocks correspond to the $x$ and $y$ variables, define
\begin{equation}\label{eq:C-def}
C(H)=\frac14\bigl(U+W+i(V-V^{T})\bigr).
\end{equation}
Equivalently, let $E$ be the $n\times n$ identity matrix. If
\[
R=
\begin{pmatrix}
E\\
iE
\end{pmatrix}
\in\mathbb{C}^{2n\times n},
\]
then
\begin{equation}\label{eq:C-R}
C(H)=\frac14R^{*}HR.
\end{equation}
If $H=D^{2}_{\mathbb{R}}u$, then $C(H)=(u_{i\bar{j}})$.

For a real vector $\xi=(a,b)\in\mathbb{R}^{n}\times\mathbb{R}^{n}$, let $\eta=a-ib\in\mathbb{C}^{n}$. Then
\begin{equation}\label{eq:rank-one-compression}
C(\xi\otimes\xi)=\frac14\eta\eta^{*}.
\end{equation}
In particular, a nonzero real rank-one positive semidefinite direction is mapped to a nonzero Hermitian rank-one positive semidefinite direction.

\subsection{Admissibility}

For a Hermitian matrix $M$, denote by $\lambda(M)$ its eigenvalue vector. We write
\[
M\in\Gamma_{2}
\]
if
\[
\sigma_{1}(M)>0,\qquad \sigma_{2}(M)>0.
\]
A real function $u$ is called complex $2$-admissible if
\[
(u_{i\bar{j}})(z)\in\Gamma_{2}
\]
for every $z$ in the domain.

The first Newton tensor of a Hermitian matrix $M$ is
\[
T_{1}(M)=\sigma_{1}(M)I-M.
\]
On $\Gamma_{2}$ one has
\begin{equation}\label{eq:T1-positive}
T_{1}(M)>0.
\end{equation}
Indeed, in a basis diagonalizing $M$, the eigenvalues of $T_{1}(M)$ are $\sum_{j\neq i}\lambda_{j}(M)$, which are positive on $\Gamma_{2}$.

\subsection{Spectral input for the complex Hessian eigenvalue}

We use the following standard analytic input. It is not proved here. The existence of a first eigenfunction and a variational eigenvalue for complex Hessian operators can be obtained from Badiane--Zeriahi \cite{badiane2023variational}; $C^{1,1}$-regularity and uniqueness up to constants are proved in the strongly $m$-pseudoconvex setting by Chu--Liu--McCleerey \cite{chu2024eigenvalue}. Interior regularity for strictly admissible complex Hessian equations follows from the usual Evans--Krylov--Schauder theory once uniform ellipticity is available; see also \cite{dinew2011apriori,dinew2012liouville,hou2010second}.

\begin{theorem}[Spectral and regularity package]\label{ass:spectral}
For every bounded, smooth, real uniformly strictly convex domain $\Omega\subset\mathbb{C}^{n}$, the first complex $\sigma_{2}$-Hessian eigenvalue problem \eqref{eq:complex-eigen} has a negative $2$-admissible eigenfunction $u$, unique up to positive multiplicative constants. Moreover
\[
u\in C^{1,1}\left( \bar{\Omega} \right)\cap C^{\infty}(\Omega),
\]
and the normalized eigenfunctions depend continuously in $C^{2}_{\mathrm{loc}}$ along smooth uniformly controlled deformations of the domain.
\end{theorem}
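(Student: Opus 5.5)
The statement is an assembly of known analytic results, so the plan is to check that each cited theorem applies under the hypothesis ``bounded, smooth, real uniformly strictly convex'' rather than to prove anything new.

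\textbf{Step 1: pseudoconvexity.} A real uniformly strictly convex domain has a smooth defining function $\rho$ with $D^{2}_{\mathbb{R}}\rho\geq c\,I$ near $\partial\Omega$. By \eqref{eq:C-R},
\[
(\rho_{i\bar j})=\tfrac14 R^{*}D^{2}_{\mathbb{R}}\rho\, R\geq \tfrac{c}{2}E .
\]
So $\rho$ is strictly plurisubharmonic, and $\Omega$ is strongly pseudoconvex. In particular $\Omega$ is strongly $m$-pseudoconvex for $m=2$: the boundary Levi form is positive definite, so its restriction to the complex tangent space lies in $\Gamma_{2}$ of the tangential $(n-1)$-dimensional space.

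\textbf{Step 2: existence and uniqueness.} Badiane--Zeriahi \cite{badiane2023variational} give the variational eigenvalue \eqref{eq:rayleigh} and a minimizer in the finite-energy class. Chu--Liu--McCleerey \cite{chu2024eigenvalue} show that on strongly $2$-pseudoconvex domains this minimizer lies in $C^{1,1}(\bar\Omega)$, solves \eqref{eq:complex-eigen}, and is unique up to positive multiples. Their argument is an iteration of Dirichlet problems $\sigma_{2}(u_{k+1,i\bar j})=\lambda(-u_{k})^{2}$ with uniform $C^{1,1}$ bounds, together with a comparison argument for uniqueness.

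\textbf{Step 3: interior smoothness.} In $\Omega$ we have $-u>0$, so the right-hand side $\lambda(-u)^{2}$ is positive and locally bounded below, and $u\in C^{1,1}$. Concavity of $\sigma_{2}^{1/2}$ on $\Gamma_{2}$ and the $C^{1,1}$ bound keep the eigenvalues of $(u_{i\bar j})$ in a compact subset of $\Gamma_{2}$ on each compact $K\subset\Omega$. The equation is therefore uniformly elliptic there. Complex Evans--Krylov then gives $C^{2,\alpha}_{\mathrm{loc}}$, and Schauder bootstrapping (the right-hand side is as smooth as $u$) gives $u\in C^{\infty}(\Omega)$.

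\textbf{Step 4: continuous dependence.} Consider a smooth family $\Omega_{s}$ with uniformly controlled convexity constants and $C^{k}$ norms. Normalize $\|u_{s}\|_{L^{\infty}}=1$. The Chu--Liu--McCleerey estimates depend only on these geometric quantities, so $u_{s}$ is bounded in $C^{1,1}$ uniformly in $s$, and each $u_{s}$ is bounded in $C^{k}_{\mathrm{loc}}$ by Step 3. The eigenvalues $\lambda(\Omega_{s})$ are continuous, using the Rayleigh quotient and pulling back test functions by diffeomorphisms. Given $s_{j}\to s$, Arzel\`a--Ascoli extracts a subsequence converging in $C^{2}_{\mathrm{loc}}$ to a normalized eigenfunction on $\Omega_{s}$. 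By uniqueness this limit is $u_{s}$, so the whole family converges.

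\textbf{Main obstacle.} The delicate point is uniformity in Step 3: the local ellipticity constants must depend only on $\operatorname{dist}(K,\partial\Omega_{s})$ and the uniform data, not on $s$. I would handle this by using the strictly psh barrier $\rho$ of Step 1 to bound $-u_{s}$ from below by a multiple of $-\rho$ on compacts, uniformly in $s$. Everything else is citation.
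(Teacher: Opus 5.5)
Steps 1--3 agree with the paper's appendix: positive Levi form, then Chu--Liu--McCleerey for existence, uniqueness and $C^{1,1}(\bar\Omega)$, then Evans--Krylov--Schauder once the right-hand side is bounded below. Your computation $(\rho_{i\bar j})=\frac14R^{*}D^{2}_{\mathbb{R}}\rho\,R\geq\frac{c}{2}E$ is a clean proof of strong pseudoconvexity.

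The gap is exactly at the point you flag in Step 4: your barrier gives the inequality in the wrong direction. A large multiple $A\rho$ of a strictly plurisubharmonic defining function is a \emph{subsolution}, $\sigma_{2}(A\rho_{i\bar j})\geq\lambda_{s}(-u_{s})^{2}$, so comparison yields $u_{s}\geq A\rho$, i.e.\ $-u_{s}\leq A(-\rho)$. That is an upper bound on $-u_{s}$, which controls the boundary gradient. It is not the lower bound $-u_{s}\geq c_{K}>0$ on $K$ that you need for uniform ellipticity. To obtain $u_{s}\leq c\rho$ by comparison, $c\rho$ would have to be a supersolution. This is impossible near $\partial\Omega_{s}$, where $(-u_{s})^{2}\to0$ while $\sigma_{2}(c\rho_{i\bar j})$ stays positive. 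More generally, $u_{s}$ is subharmonic ($\Delta_{\mathbb{R}}u_{s}=4\sigma_{1}((u_{s})_{i\bar j})>0$), so it is bounded above by superharmonic majorants, not by psh functions. Without this bound the $C^{k}_{\mathrm{loc}}$ estimates of Step 3 are not uniform in $s$. You then only get $C^{1,\alpha}$ convergence, not $C^{2}_{\mathrm{loc}}$.

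The missing ingredient is the compactness argument the paper uses. Suppose $z_{j}\in K$, $s_{j}\to s_{0}$ and $u_{s_{j}}(z_{j})\to0$. The uniform $C^{1,1}(\overline{\Omega_{s}})$ bounds, after extension to a fixed ball, give a uniform subsequential limit $u_{\infty}\leq0$. This limit is subharmonic, vanishes on $\partial\Omega_{s_{0}}$, and keeps your normalization $\min u_{\infty}=-1$ (the paper's $\int(-u)^{3}=1$ works equally well). It also vanishes at an interior point of $K$, so the strong maximum principle forces $u_{\infty}\equiv0$, a contradiction. This gives $-u_{s}\geq c_{K}$ uniformly, after which your Steps 3--4 go through.

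Separately, continuity of $\lambda(\Omega_{s})$ by pulling back test functions needs more care than you give it. Non-holomorphic diffeomorphisms do not preserve $2$-admissibility: the error term is $D\varphi$ times second derivatives of the map. Near the boundary this is not small relative to $\sigma_{2}(\varphi_{i\bar j})$ for degenerate test functions such as the eigenfunction. It is simpler to use homotheties about a common interior point. For convex domains, $(1-\delta)\Omega_{s_{0}}\subset\Omega_{s}\subset(1+\delta)\Omega_{s_{0}}$, and monotonicity with $\lambda(r\Omega)=r^{-4}\lambda(\Omega)$ gives continuity. Alternatively, as the paper does, use only the uniform bounds from $B_{r}\subset\Omega_{s}\subset B_{R}$ and recover $\lambda_{\infty}=\lambda_{s_{0}}$ from uniqueness of the principal pair.
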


\begin{remark}
For completeness, we justify this theorem in the appendix.
\end{remark}

\subsection{Bian--Guan level-set constant-rank theorem}

We use the following applied form of the Bian--Guan level-set constant-rank theorem \cite{bian2009microscopic,bian2010structural}.

\begin{theorem}\label{thm:BG}
Let $D\subset\mathbb{R}^{N}$ be connected. Let $v\in C^{3,1}(D)$ be a convex solution of
\[
F(D^{2}v,Dv,v,x)=0,
\]
where $F\in C^{2,1}$ in its variables. Assume:
\begin{enumerate}
\item $F$ is elliptic along the solution, i.e. $(F^{\alpha\beta})>0$;
\item $F(0,Dv,v,x)\neq0$ along the solution;
\item for every fixed relevant gradient $p$, the set
\[
\{(A,z,x)\in S^{N}_{++}\times\mathbb{R}\times D:F(A^{-1},p,z,x)\leq0\}
\]
is locally convex near the relevant points.
\end{enumerate}
Then $\operatorname{rank}D^{2}v$ is constant in $D$.
\end{theorem}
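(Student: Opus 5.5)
This theorem is the applied form of the Bian--Guan microscopic convexity principle, so my plan is to reduce it to their structural theorem rather than reprove the constant-rank machinery. I take $l=\min_{D}\operatorname{rank}D^{2}v$ and aim to show that the set $D_{l}=\{x\in D:\operatorname{rank}D^{2}v(x)=l\}$ is open. Since rank is lower semicontinuous, $D_{l}$ is also closed, and connectedness of $D$ then gives $D_{l}=D$. The case $l=N$ is trivial. If $l=0$ somewhere, then $D^{2}v=0$ at that point, so $F(0,Dv,v,x)=0$ there, contradicting hypothesis 2. Hence $1\le l<N$.

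To prove openness, I fix $x_{0}\in D_{l}$ and work in a small neighborhood $\mathcal O$. I rotate coordinates at each point so that $D^{2}v=\operatorname{diag}(v_{11},\dots,v_{NN})$, with $G=\{1,\dots,l\}$ indexing the eigenvalues bounded below by some $c>0$ and $B=\{l+1,\dots,N\}$ the small ones. I introduce the standard test quantity
\[
\phi=\sigma_{l+1}(D^{2}v)+\frac{\sigma_{l+2}(D^{2}v)}{\sigma_{l+1}(D^{2}v)},
\]
which is $C^{1,1}$ because $v\in C^{3,1}$. It satisfies $\phi\ge0$, $\phi(x_{0})=0$, and $\phi\sim\sum_{i\in B}v_{ii}$. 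The goal is the differential inequality
\[
F^{\alpha\beta}\phi_{\alpha\beta}\le C\bigl(\phi+|D\phi|\bigr)\quad\text{in }\mathcal O .
\]
Hypothesis 1 makes the operator elliptic, so the strong maximum principle then forces $\phi\equiv0$ near $x_{0}$. That means the rank stays at most $l$ there, and by minimality it equals $l$.

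The computation goes as follows. I differentiate the equation once and twice. I discard the terms $v_{ii\alpha}$ and $v_{ii\alpha\beta}$ for $i\in B$ using $\phi$ and $D\phi$, with the usual $\sigma_{l+1}$-quotient bookkeeping, and I absorb the first-derivative dependence on $(p,z,x)$ into $C(\phi+|D\phi|)$, using $v_{i\alpha}=0$ modulo $O(\phi)$ for $i\in B$. What remains is, modulo $C(\phi+|D\phi|)$, a quadratic form in the third derivatives $v_{ij\alpha}$ (with $i\in B$, $j\in G$) and in the mixed second derivatives of $F$ in $(A,p,z,x)$ restricted to the bad directions. I need this form to have the right sign.

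The main obstacle is exactly this sign, and it is where hypothesis 3 enters. Following Bian--Guan, I identify the leftover form with the second variation of $\tilde F(A,p,z,x)=F(A^{-1},p,z,x)$ along the curve $A(s)=(D^{2}v+\text{perturbation})^{-1}$ restricted to the good block. The identity behind this is
\[
D^{2}(A^{-1})=2A^{-1}\,dA\,A^{-1}\,dA\,A^{-1}-A^{-1}\,d^{2}A\,A^{-1},
\]
which converts the terms $\sum_{j\in G}v_{ij\alpha}^{2}/v_{jj}$ into curvature of the inverse map. I then perturb to $D^{2}v+\varepsilon I$ to stay in $S^{N}_{++}$ and let $\varepsilon\to0$. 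Local convexity of the sublevel set $\{\tilde F\le0\}$ in $(A,z,x)$ at fixed $p$ says that on the tangent space of the level set, $D^{2}\tilde F\ge$ (a multiple of the gradient terms). Because $v$ satisfies $F=0$, the tangency constraint is provided by the once-differentiated equation, and this yields the required nonpositivity up to $C(\phi+|D\phi|)$.

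If the direct identification proves messy, my fallback is to cite the structural theorem of \cite{bian2010structural} (Theorem 1.2 there) verbatim. Its hypotheses are ellipticity, the local convexity of $\{F(A^{-1},p,z,x)\le0\}$, and, in the level-set form with $F(0,\cdot)\neq0$, exclusion of rank zero. These match items 1–3 directly, with $C^{3,1}$/$C^{2,1}$ regularity matching theirs, so the statement follows by quotation.
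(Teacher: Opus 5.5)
Your proposal takes the same route as the paper: the paper gives no independent proof of this statement and simply quotes it as an applied form of the Bian--Guan microscopic convexity principle \cite{bian2009microscopic,bian2010structural}, so your fallback of citing their structural theorem is exactly what the paper does, and your outline (minimal rank $l$, exclusion of $l=0$ by hypothesis 2, the test function $\sigma_{l+1}+\sigma_{l+2}/\sigma_{l+1}$, and the strong maximum principle) is the standard argument behind that citation. Be aware, though, that the $\varepsilon I$-regularization in your sketch does not work as written: by strict ellipticity $F(D^{2}v+\varepsilon I,Dv,v,x)>0$, so $(D^{2}v+\varepsilon I)^{-1}$ does not lie on the boundary of $\{F(A^{-1},p,z,x)\leq0\}$, which is where the level-set hypothesis gives information, and the degenerate directions genuinely need the Bian--Guan bookkeeping, so it is the citation that actually carries the proof.
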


\section{Inverse convexity for compressed Hessians}

This section contains the algebraic core of the paper. And the calculations in this section can be referenced from the \cite{limasa}.

\subsection{A compressed inverse log-det lemma}

\begin{lemma}\label{lem:compressed-logdet}
Let $H\in S^{N}_{++}$ and let $X\in\mathbb{C}^{N\times m}$ have full column rank. Then
\[
H\longmapsto \log\det(X^{*}H^{-1}X)
\]
is convex on $S^{N}_{++}$.
\end{lemma}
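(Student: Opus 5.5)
Our approach is to reduce the statement to the classical fact that $\log\det$ of a Schur complement is concave. Concretely, we realize $\log\det(X^{*}H^{-1}X)$ as $\log\det H$ minus a concave function of $H$; since $\log\det H$ is concave, this alone does not give convexity. We therefore use a variational (dual) formula instead.

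\textbf{Step 1 (reduction to a real statement).} Write $X=X_{1}+iX_{2}$ with $X_{1},X_{2}$ real $N\times m$. For a complex Hermitian positive definite $m\times m$ matrix $M=X^{*}H^{-1}X$, the realification $\widehat{M}=\begin{pmatrix}\operatorname{Re}M & -\operatorname{Im}M\\ \operatorname{Im}M & \operatorname{Re}M\end{pmatrix}$ satisfies $\det\widehat{M}=(\det M)^{2}$. Moreover $\widehat{M}=Y^{T}H_{2}^{-1}Y$, where $H_{2}=\operatorname{diag}(H,H)\in S^{2N}_{++}$ and $Y=\begin{pmatrix}X_{1}&-X_{2}\\ X_{2}&X_{1}\end{pmatrix}$ is real $2N\times 2m$ of full column rank. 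Since $H\mapsto H_{2}$ is linear, it suffices to prove the real statement: for real full-column-rank $Y$, the map $G\mapsto\log\det(Y^{T}G^{-1}Y)$ is convex on $S^{K}_{++}$.

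\textbf{Step 2 (real case via Schur complement).} Complete $Y$ to an invertible $K\times K$ matrix $P=(Y\;Z)$, where the columns of $Z$ span the orthogonal complement of the range of $Y$ (with respect to the standard inner product). Set $G'=P^{-1}GP^{-T}$, so $G^{-1}=P^{-T}G'^{-1}P^{-1}$. Then $Y^{T}G^{-1}Y$ is the top-left $m\times m$ block of $P^{T}G^{-1}P=G'^{-1}$. By the block inverse formula, this block equals $S^{-1}$, where $S=G'_{11}-G'_{12}G'^{-1}_{22}G'_{21}$ is the Schur complement. Hence
\[
\log\det(Y^{T}G^{-1}Y)=-\log\det S(G').
\]
The map $G\mapsto G'$ is linear. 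The Schur complement $S$ is matrix-concave in $G'$, because $S=\min_{L}(\text{quadratic form})$, i.e.
\[
\xi^{T}S\xi=\inf_{\eta}(\xi,\eta)^{T}G'(\xi,\eta),
\]
an infimum of linear functions of $G'$. Finally, $-\log\det$ is convex and matrix-decreasing on $S^{m}_{++}$. The composition of a convex decreasing function with a concave matrix map is convex, which gives the claim.

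\textbf{Main obstacle.} The main obstacle is the bookkeeping in Step 1: checking that the realification identity $\det\widehat{M}=(\det M)^{2}$ holds and that $\widehat{M}$ indeed factors as $Y^{T}\operatorname{diag}(H,H)^{-1}Y$ with $Y$ of full rank. If this becomes messy, an alternative is to run Step 2 directly over $\mathbb{C}$. Complete $X$ to an invertible complex $N\times N$ matrix $P$ and use the Hermitian Schur complement of the Hermitian matrix $P^{-1}HP^{-*}$; this matrix is complex-linear-dependent on $H$ over the reals. Then use convexity of $-\log\det$ on $H^{m}_{++}$, which is valid since its Hessian is $\operatorname{tr}(M^{-1}\dot M M^{-1}\dot M)\geq0$. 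This complex route actually avoids Step 1 entirely, and I would likely present it that way.
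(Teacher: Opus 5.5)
Your proposal is correct, but it takes a different route from the paper.

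The paper argues infinitesimally. Along a line $H+tK$ it computes $\phi''(0)=2\operatorname{tr}(A^{-1}C)-\operatorname{tr}(A^{-1}BA^{-1}B)$, where $A,B,C$ are the compressions of $I,L,L^{2}$ by $H^{-1/2}X$ with $L=H^{-1/2}KH^{-1/2}$. It then concludes from the Gram-matrix Schur-complement bound $C\geq BA^{-1}B$.

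You argue globally. After the congruence $G'=P^{-1}HP^{-*}$, the matrix $X^{*}H^{-1}X$ is the inverse of the Schur complement $S(G')$. That Schur complement is Loewner-concave as an infimum of linear forms, and you compose with the convex, Loewner-decreasing function $-\log\det$.

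Your route needs no differentiation of $H\mapsto H^{-1}$. It also yields a stronger fact: $H\mapsto(X^{*}H^{-1}X)^{-1}$ is concave in the Loewner order. Composing with the concave, increasing $\det^{1/m}$ then makes $\det(X^{*}H^{-1}X)^{-1/m}$ concave, which already implies the stated log-convexity. The paper's computation, in turn, is fully self-contained and handles complex $X$ directly, without completing $X$ to an invertible matrix.

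As you note yourself, Step 1 is unnecessary. The Hermitian version of Step 2 works verbatim, since $P^{-1}HP^{-*}$ is Hermitian positive definite and depends $\mathbb{R}$-linearly on $H$. Your realification identities $\widehat{M}=Y^{T}\operatorname{diag}(H,H)^{-1}Y$ and $\det\widehat{M}=(\det M)^{2}$ are nonetheless correct. Your opening remark about writing the function as $\log\det H$ minus a concave function plays no role and can be dropped.
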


\begin{proof}
Fix a real symmetric direction $K$ and set $H(t)=H+tK$. Let $S=H^{-1}$ and
\[
M(t)=X^{*}H(t)^{-1}X,\qquad \phi(t)=\log\det M(t).
\]
It suffices to prove $\phi''(0)\geq0$. The inverse matrix identities are
\[
\frac{d}{dt}H(t)^{-1}=-H(t)^{-1}KH(t)^{-1},
\]
and
\[
\frac{d^{2}}{dt^{2}}H(t)^{-1}=2H(t)^{-1}KH(t)^{-1}KH(t)^{-1}.
\]
Thus, at $t=0$,
\[
M(0)=A:=X^{*}SX,\qquad
M'(0)=-B,\quad B:=X^{*}SKSX,
\]
and
\[
M''(0)=2C,\qquad C:=X^{*}SKSKSX.
\]
The standard formula for $\log\det$ gives
\[
\phi''(0)=2\operatorname{tr}(A^{-1}C)-\operatorname{tr}(A^{-1}BA^{-1}B).
\]
Let $Y=H^{-1/2}X$ and $L=H^{-1/2}KH^{-1/2}$. Then
\[
A=Y^{*}Y,\qquad B=Y^{*}LY,\qquad C=Y^{*}L^{2}Y.
\]
The Gram matrix
\[
\begin{pmatrix}
Y^{*}Y & Y^{*}LY\\
Y^{*}LY & Y^{*}L^{2}Y
\end{pmatrix}
=
\left( Y,LY \right)^{*} \left( Y,LY \right)
\]
is positive semidefinite. Its Schur complement yields
\[
C\geq BA^{-1}B.
\]
Therefore
\[
\phi''(0)\geq
2\operatorname{tr}(A^{-1}BA^{-1}B)-\operatorname{tr}(A^{-1}BA^{-1}B)
=\operatorname{tr}(A^{-1}BA^{-1}B)\geq0.
\]
This proves convexity.
\end{proof}

\begin{proposition}[Inverse log-convexity]\label{prop:inverse-log}
For every $1\leq k\leq n$, the function
\[
H\longmapsto \log\sigma_{k}(C(H^{-1}))
\]
is convex on $S^{2n}_{++}$.
\end{proposition}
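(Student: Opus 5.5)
We plan to write $\sigma_k$ of a Hermitian matrix as a sum of principal minors, or better as a sum of determinants of compressions, and then apply Lemma \ref{lem:compressed-logdet} termwise, using that log-sum-exp of convex functions is convex. By \eqref{eq:C-R}, $C(H^{-1})=\frac14 R^{*}H^{-1}R$. For a subset $I\subset\{1,\dots,n\}$ with $|I|=k$, let $R_I\in\mathbb{C}^{2n\times k}$ denote the columns of $R$ indexed by $I$. The principal $k\times k$ minor of $C(H^{-1})$ on $I$ is then $4^{-k}\det(R_I^{*}H^{-1}R_I)$. Since $\sigma_k$ of a Hermitian matrix equals the sum of its principal $k\times k$ minors, we get
\[
\sigma_k\bigl(C(H^{-1})\bigr)=4^{-k}\sum_{|I|=k}\det\bigl(R_I^{*}H^{-1}R_I\bigr).
\]

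The matrix $R=\begin{pmatrix}E\\ iE\end{pmatrix}$ has full column rank (its top block is the identity), so every $R_I$ also has full column rank. Lemma \ref{lem:compressed-logdet} with $X=R_I$, $m=k$ therefore shows that $f_I(H):=\log\det(R_I^{*}H^{-1}R_I)$ is convex on $S^{2n}_{++}$. Each $R_I^{*}H^{-1}R_I$ is positive definite, because $H^{-1}>0$ and $R_I$ is injective, so its determinant is positive and $f_I$ is well defined and real.

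We then write
\[
\log\sigma_k\bigl(C(H^{-1})\bigr)=-k\log4+\log\sum_{|I|=k}e^{f_I(H)}.
\]
The function $(t_I)\mapsto\log\sum_I e^{t_I}$ is convex and nondecreasing in each coordinate. Composing it with convex functions $f_I$ therefore gives a convex function: along any segment $H_s$, convexity of each $f_I$ gives $f_I(H_s)\le(1-s)f_I(H_0)+sf_I(H_1)$, and we then apply monotonicity followed by convexity of log-sum-exp. This proves the proposition.

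The only point needing care is the identification of $\sigma_k$ of the compressed matrix with a sum of compressed determinants. This requires checking that $R_I^{*}H^{-1}R_I$ is exactly the principal submatrix of $R^{*}H^{-1}R$ indexed by $I$, which is immediate because selecting columns of $R$ corresponds to selecting rows and columns of the product. The factor $\frac14$ only contributes an additive constant. I expect no genuine obstacle: the analytic content sits entirely in Lemma \ref{lem:compressed-logdet}, and the remaining step is the standard log-sum-exp composition rule.
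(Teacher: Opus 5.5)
Your proposal is correct and follows the paper's proof: the paper also writes $\sigma_k(C(H^{-1}))$ as the sum over $|I|=k$ of $\det(X_I^{*}H^{-1}X_I)$ with $X=R/2$, applies Lemma \ref{lem:compressed-logdet} to each term, and concludes by convexity of log-sum-exp. Using $R_I$ with the factor $4^{-k}$ instead of $X=R/2$ is equivalent, and your explicit checks that each $R_I$ has full column rank and that log-sum-exp is nondecreasing in each coordinate supply details the paper leaves implicit.
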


\begin{proof}
Let $X=R/2$, so that $C(H^{-1})=X^{*}H^{-1}X$. For each $k$-element subset $I\subset\{1,\ldots,n\}$, let $E_{I}$ be the coordinate inclusion and set $X_{I}=XE_{I}$. Then
\[
\det(C(H^{-1})_{I,I})=\det(X_{I}^{*}H^{-1}X_{I}).
\]
Since
\[
\sigma_{k}(C(H^{-1}))
=
\sum_{|I|=k}\det(X_{I}^{*}H^{-1}X_{I}),
\]
we have
\[
\log\sigma_{k}(C(H^{-1}))
=
\log\sum_{|I|=k}
\exp\left(\log\det(X_{I}^{*}H^{-1}X_{I})\right).
\]
Each term inside the log-sum-exp is convex by Lemma \ref{lem:compressed-logdet}; the log-sum-exp of convex functions is convex. This proves the claim.
\end{proof}

\subsection{The rank-one level-set convexity}

\begin{theorem}[Rank-one inverse level-set convexity]\label{thm:rank-one}
For every $q\in\mathbb{C}^{n}$ and every $\lambda>0$, the set
\begin{equation}\label{eq:Kqlambda}
K_{q,\lambda}
=
\left\{
A\in S^{2n}_{++}:
\sigma_{2}\bigl(C(A^{-1})-qq^{*}\bigr)\leq\lambda
\right\}
\end{equation}
is convex.
\end{theorem}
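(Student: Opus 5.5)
The plan is to prove convexity of $K_{q,\lambda}$ through a second-order criterion for sublevel sets. Write $X=R/2$, $M(A)=C(A^{-1})=X^{*}A^{-1}X$, $N=M-qq^{*}$ and $F(A)=\sigma_{2}(N)$. Fix $A_{0},A_{1}\in K_{q,\lambda}$ and set $A(t)=A_{0}+tK$ with $K=A_{1}-A_{0}$. It suffices to show that $f(t)=F(A(t))$ has no interior strict local maximum above the level $\lambda$. Replacing $\lambda$ by $\lambda+\varepsilon$ and using a small generic perturbation, this reduces to the following claim: at any $t$ with $f(t)=\lambda>0$ and $f'(t)=0$, we have $f''(t)\geq0$. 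This is the standard tangential-Hessian test for quasiconvexity of $F$.

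\textbf{Step 1: derivatives.} The derivatives are computed exactly as in the proof of Lemma \ref{lem:compressed-logdet}. Put $S=A^{-1}$, $Y=A^{-1/2}X$ and $L=A^{-1/2}KA^{-1/2}$. Then $M=Y^{*}Y$, $M'=-B$ with $B=Y^{*}LY$, and $M''=2C$ with $C=Y^{*}L^{2}Y$. The Schur complement argument of that lemma gives $C\geq BM^{-1}B$. Since $q$ is fixed, $N'=-B$ and $N''=2C$. Using $D\sigma_{2}(N)[E]=\operatorname{tr}(T_{1}(N)E)$ and $D^{2}\sigma_{2}(N)[E,E]=(\operatorname{tr}E)^{2}-\operatorname{tr}(E^{2})$, this gives
\[
f'=-\operatorname{tr}(T_{1}(N)B),\qquad
f''=2\operatorname{tr}(T_{1}(N)C)+(\operatorname{tr}B)^{2}-\operatorname{tr}(B^{2}).
\]

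\textbf{Step 2: sign of $T_{1}(N)$.} To use $C\geq BM^{-1}B$, I need $T_{1}(N)\geq0$, that is, $N\in\overline{\Gamma_{2}}$. On the level set $\sigma_{2}(N)=\lambda>0$, so $N\in\Gamma_{2}$ or $-N\in\Gamma_{2}$. The second alternative must be excluded or treated separately. My plan is to note that $N=M-qq^{*}$ with $M>0$ has at most one negative eigenvalue, so $-N$ cannot lie in $\Gamma_{2}$ when $n\geq2$. Indeed, $-N\in\Gamma_{2}$ would require the sum of any $n-1$ eigenvalues of $-N$ to be positive by \eqref{eq:T1-positive}, and this fails once $N$ has $n-1\geq1$ nonnegative eigenvalues. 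Hence $N\in\Gamma_{2}$ and $T_{1}(N)>0$. The conclusion is
\[
f''\geq 2\operatorname{tr}\bigl(T_{1}(N)BM^{-1}B\bigr)+(\operatorname{tr}B)^{2}-\operatorname{tr}(B^{2}),
\]
to be used under the constraint $\operatorname{tr}(T_{1}(N)B)=0$.

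\textbf{Step 3: the matrix inequality (main obstacle).} It remains to show that the right-hand side above is $\geq0$ for every Hermitian $B$ satisfying $\operatorname{tr}(T_{1}(N)B)=0$, where $N\in\Gamma_{2}$ and $M=N+qq^{*}$. This is an algebraic inequality, and it is where the rank-one shift enters. Since $M\leq$ fails to compare with $N$ favourably in general, I would first use $M^{-1}=N^{-1}-\frac{N^{-1}qq^{*}N^{-1}}{1+q^{*}N^{-1}q}$ when $N>0$. In the general $\Gamma_{2}$ case I would work instead in a unitary frame diagonalizing $N=\operatorname{diag}(\mu_{1},\dots,\mu_{n})$ and expand in the entries $b_{i\bar j}$.

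The target is the $\sigma_{2}$ analogue of the classical inequality behind inverse concavity, namely
\[
2\sum_{i,j}\frac{\sigma_{1}(\mu|i)}{\mu_{j}}|b_{i\bar j}|^{2}\;\geq\;\sum_{i\neq j}|b_{i\bar j}|^{2}-\sum_{i\neq j}b_{i\bar i}b_{j\bar j}.
\]
In the $q=0$ case, this is exactly the second-order form of Proposition \ref{prop:inverse-log} with $k=2$ (log-convexity implies level-set convexity). My plan for general $q$ has two parts. First, I would verify that replacing $M^{-1}$ by the smaller $(N+qq^{*})^{-1}$ is compensated by the linear constraint. Concretely, I would absorb the correction term $\frac{|(B N^{-1}q)|^{2}_{T_{1}}}{1+q^{*}N^{-1}q}$ using Cauchy--Schwarz together with the identity $q^{*}T_{1}(N)q=\sigma_{2}(M)-\sigma_{2}(N)$. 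Second, I would handle the off-diagonal and diagonal parts of $B$ separately: the off-diagonal part should be controlled by the positivity of $T_{1}(N)$, and the diagonal part by the constraint, which kills the only indefinite direction of $(\operatorname{tr}B)^{2}-\operatorname{tr}(B^{2})$ restricted to $\Gamma_{2}$-tangent vectors.

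If this direct computation proves too messy, my fallback is a different route. I would realize $M-qq^{*}$ as a limit of compressions $\tilde X^{*}\tilde A^{-1}\tilde X$ of an enlarged matrix in $S^{2n+1}_{++}$ plus an explicit correction. I would then reduce the claim to Lemma \ref{lem:compressed-logdet} applied to each principal $2\times2$ minor, after expanding $\sigma_{2}(M-qq^{*})=\sigma_{2}(M)-q^{*}T_{1}(M)q$.
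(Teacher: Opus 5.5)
Your Steps 1 and 2 are correct. The derivative formulas are right, and the interlacing argument is a good observation: $N=M-qq^{*}$ has at most one negative eigenvalue, so on the level set $N\in\Gamma_{2}$ rather than $-N\in\Gamma_{2}$.

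\textbf{The gap.} Step 3 is never carried out, and Step 3 is the whole theorem. As $K$ ranges over real symmetric matrices, $B=X^{*}A^{-1}KA^{-1}X$ ranges over all Hermitian matrices. In the uncompressed model $Z\mapsto\sigma_{2}(Z^{-1}-qq^{*})$ on $H^{n}_{++}$ one has exactly $M''=2BM^{-1}B$, so your bound $Y^{*}L^{2}Y\geq BM^{-1}B$ loses nothing there. Consequently the inequality
\[
2\operatorname{tr}\bigl(T_{1}(N)BM^{-1}B\bigr)+(\operatorname{tr}B)^{2}-\operatorname{tr}(B^{2})\geq0
\quad\text{whenever }\operatorname{tr}(T_{1}(N)B)=0
\]
is precisely the tangential second-order form of the same level-set statement with $X=I$. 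Nothing has been reduced.

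Your heuristics for this inequality also point the wrong way.
\begin{itemize}
\item In a frame diagonalizing $N$, the constraint involves only the diagonal of $B$. It reads $\beta(\mu,b)=0$, where $\beta$ is the Lorentzian polarization of $\sigma_{2}$ and $\beta(\mu,\mu)=\sigma_{2}(\mu)>0$. So on the hyperplane the diagonal part $2\sigma_{2}(b_{1\bar1},\dots,b_{n\bar n})$ is negative definite.
\item The off-diagonal part $-\sum_{i\neq j}|b_{i\bar j}|^{2}$ is nonpositive.
\item So the constraint does not ``kill'' anything. Both pieces are bad and must be dominated by $2\operatorname{tr}(T_{1}(N)BM^{-1}B)$, which is exactly the term that shrinks when $q\neq0$ (for $N>0$, $M^{-1}\leq N^{-1}$).
\end{itemize}
Hence the $q=0$ case from Proposition~\ref{prop:inverse-log} gives no leverage, and the Sherman--Morrison/Cauchy--Schwarz absorption you propose is exactly the unproved estimate.

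The fallback does not address the difficulty either. In $\sigma_{2}(M)-q^{*}T_{1}(M)q$ the rank-one term enters with a minus sign. Convexity of each $\log\det$ of a principal minor (Lemma~\ref{lem:compressed-logdet}) therefore says nothing about sublevel sets of the difference.

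A smaller point: $f''\geq0$ at critical points does not by itself rule out an interior maximum (think of $-t^{4}$). You would need a quantitative form such as $f''\geq-\Lambda|f'|$. That can be obtained from the hyperplane inequality by compactness, but you should say so.

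\textbf{The missing idea.} You need to neutralize the minus sign algebraically instead of fighting it with a second-order estimate. After rotating to $q=\rho e$, one has $\sigma_{2}(C-\rho^{2}ee^{*})=\sigma_{2}(C)-\rho^{2}\operatorname{tr}(PC)$ with $P=I-ee^{*}$. Since $\operatorname{tr}(PC)=\operatorname{tr}(\mathcal{P}A^{-1})>0$, the set $K_{q,\lambda}$ is the sublevel set $\{J_{\lambda}\leq\rho^{2}\}$ of
\[
J_{\lambda}(A)=\frac{\sigma_{2}(C)}{\operatorname{tr}(PC)}-\frac{\lambda}{\operatorname{tr}(PC)},\qquad C=C(A^{-1}).
\]
\begin{itemize}
\item The first term is convex. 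Indeed $\operatorname{tr}(PC)/\sigma_{2}(C)=g/D_{\mathcal{R}}g$, where $g=D_{\mathcal{P}}\det$, $\mathcal{R}=X\Theta X^{*}$ and $\Theta=ee^{*}+\frac12P$; this rests on the identity $D_{\mathcal{R}}g=\det A\,\sigma_{2}(C)$. G{\aa}rding's quotient concavity then makes $g/D_{\mathcal{R}}g$ positive and concave, so its reciprocal is convex.
\item The second term is convex because $A\mapsto1/\operatorname{tr}(\mathcal{P}A^{-1})$ is concave and $\lambda>0$.
\end{itemize}
This is where $\lambda>0$ is used. The argument needs neither the sign of $T_{1}(N)$ nor any matrix inequality in $B$, and the case $q=0$ follows by intersecting the convex sets $K_{\varepsilon e_{n},\lambda}$.
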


\begin{proof}
First assume $q\neq0$. By unitary invariance, we may suppose
\[
q=\rho e,\qquad \rho=|q|>0.
\]
Set
\[
P=I-ee^{*}.
\]
For $C=C(A^{-1})$, the rank-one perturbation formula gives
\begin{equation}\label{eq:rank-one-perturb}
\sigma_{2}(C-\rho^{2}ee^{*})
=
\sigma_{2}(C)-\rho^{2}e^{*}T_{1}(C)e
=
\sigma_{2}(C)-\rho^{2}\operatorname{tr}(PC).
\end{equation}
Since $C>0$ and $P\neq0$, one has $\operatorname{tr}(PC)>0$. Hence the condition in \eqref{eq:Kqlambda} is equivalent to
\begin{equation}\label{eq:J-level}
J_{\lambda}(A):=
\frac{\sigma_{2}(C(A^{-1}))-\lambda}
{\operatorname{tr}(PC(A^{-1}))}
\leq\rho^{2}.
\end{equation}
It remains to prove that $J_{\lambda}$ is convex.

Let $X=R/2$, so $C=X^{*}A^{-1}X$. Put
\[
\Theta=ee^{*}+\frac12P,\qquad
\mathcal{P}=XPX^{*},\qquad
\mathcal{R}=X\Theta X^{*}.
\]
Let
\[
g(A)=D_{\mathcal{P}}\det(A).
\]
Then
\[
g(A)=\det A\,\operatorname{tr}(PC).
\]
Using the second derivative formula for the determinant,
\[
D_{\mathcal{R}}g(A)
=
D_{\mathcal{R}}D_{\mathcal{P}}\det(A)
=
\det A\,[\operatorname{tr}(PC)\operatorname{tr}(\Theta C)-\operatorname{tr}(PC\Theta C)].
\]
Let $s=e^{*}Ce$. A direct computation using $P=I-ee^{*}$ and $\Theta=ee^{*}+\frac12P$ gives
\[
\operatorname{tr}(PC)\operatorname{tr}(\Theta C)
=\frac{1}{2}\bigl((\operatorname{tr}C)^{2}-s^{2}\bigr), \,\,\,\,
\operatorname{tr}(PC\Theta C)
=\frac{1}{2}\bigl(\operatorname{tr}(C^{2})-s^{2}\bigr).
\]
Hence
\[
\operatorname{tr}(PC)\operatorname{tr}(\Theta C)-\operatorname{tr}(PC\Theta C)
=\frac{1}{2}\bigl((\operatorname{tr}C)^{2}-\operatorname{tr}(C^{2})\bigr)
=\sigma_2(C).
\]
Therefore
\[
D_{\mathcal{R}}g(A)=\det A\,\sigma_{2}(C).
\]
Consequently
\begin{equation}\label{eq:Phi-compressed}
\Phi(A):=
\frac{\operatorname{tr}(PC)}{\sigma_{2}(C)}
=
\frac{g(A)}{D_{\mathcal{R}}g(A)}.
\end{equation}

Now we extend the definition of $\phi$ from $S^{2n}_{++}$ to $H^{2n}_{++}$. The determinant is hyperbolic on the positive definite cone. Directional derivatives in directions in the closure of the cone preserve hyperbolicity, and their cones contain $H^{2n}_{++}$; see G{\aa}rding \cite{garding1959inequality}, Bauschke--Guler--Lewis--Sendov \cite{bauschke2001hyperbolic}, and Renegar \cite{renegar2006hyperbolic}. Applying G{\aa}rding's quotient concavity theorem to $g$ in the direction $\mathcal{R}$, with an approximation by positive definite directions if necessary, we obtain that $\Phi$ is concave on $H^{2n}_{++}$. Since $S^{2n}_{++}$ is included in $H^{2n}_{++}$ and $S^{2n}_{++}$ is convex, we know that $\Phi$ is concave on $S^{2n}_{++}$.

Since $\Phi>0$, the reciprocal
\[
\frac{\sigma_{2}(C)}{\operatorname{tr}(PC)}
\]
is convex. Moreover,
\[
L(A):=\operatorname{tr}(PC(A^{-1}))=\operatorname{tr}(\mathcal{P}A^{-1}).
\]
The Alvarez--Lasry--Lions lemma \cite{alvarez1997convex} gives that $A\mapsto1/L(A)$ is concave. Hence
\[
-\lambda\frac{1}{L(A)}
\]
is convex. Thus
\[
J_{\lambda}(A)=
\frac{\sigma_{2}(C)}{\operatorname{tr}(PC)}
-\frac{\lambda}{\operatorname{tr}(PC)}
\]
is convex. The sublevel set \eqref{eq:J-level} is therefore convex.

Now let $q=0$. Fix any unit vector $e_{n}$ and set $q_{\varepsilon}=\varepsilon e_{n}$. By the previous step, $K_{q_{\varepsilon},\lambda}$ is convex for every $\varepsilon>0$. We claim
\[
K_{0,\lambda}=\bigcap_{\varepsilon>0}K_{q_{\varepsilon},\lambda}.
\]
Indeed, the condition $A\in K_{q_{\varepsilon},\lambda}$ is
\[
\sigma_{2}(C(A^{-1}))-\varepsilon^{2}\operatorname{tr}(PC(A^{-1}))
\leq\lambda.
\]
Letting $\varepsilon\downarrow0$ gives the claim. An arbitrary intersection of convex sets is convex. Thus $K_{0,\lambda}$ is convex.
\end{proof}

\section{A constant-rank theorem for the transformed equation}

Consider the transformed equation
\begin{equation}\label{eq:transformed}
\sigma_{2}(v_{i\bar{j}}-v_{i}v_{\bar{j}})=\lambda.
\end{equation}
In real notation this is
\[
F(D^{2}_{\mathbb{R}}v,Dv)=0,
\]
where
\begin{equation}\label{eq:F-complex}
F(H,p)=\sigma_{2}\bigl(C(H)-q(p)q(p)^{*}\bigr)-\lambda,
\end{equation}
and $q(p)$ denotes the complex gradient associated to the real gradient $p=Dv$.

\begin{theorem}[Complex $\sigma_{2}$ constant rank]\label{thm:complex-rank}
Let $D\subset\mathbb{C}^{n}$ be connected. Let $v\in C^{4}(D)$ solve \eqref{eq:transformed}. Assume
\[
D^{2}_{\mathbb{R}}v\geq0
\]
and
\[
N:=C(D^{2}_{\mathbb{R}}v)-\partial v\otimes\bar{\partial}v\in\Gamma_{2}.
\]
Then $\operatorname{rank}D^{2}_{\mathbb{R}}v$ is constant in $D$.
\end{theorem}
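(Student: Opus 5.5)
We will obtain Theorem \ref{thm:complex-rank} by applying the Bian--Guan theorem (Theorem \ref{thm:BG}) with $N=2n$ to the real operator $F(H,p)$ from \eqref{eq:F-complex}. Since $F$ has no explicit dependence on $v$ or $x$, the third hypothesis reduces to a statement about the $A$-variable alone, and this is exactly what Theorem \ref{thm:rank-one} supplies. The plan is therefore to check the three hypotheses of Theorem \ref{thm:BG} in turn. We take $v\in C^{4}\subset C^{3,1}$ locally, and we note that $F$ is a polynomial in $(H,p)$, so it is certainly $C^{2,1}$.

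\emph{Ellipticity.} Let $N=C(H)-qq^{*}$. Differentiating $F$ in $H$ in a real symmetric direction $K$, and using the linearity of $C$ together with \eqref{eq:C-R}, gives
\[
F^{\alpha\beta}K_{\alpha\beta}=\operatorname{tr}\bigl(T_{1}(N)C(K)\bigr)=\tfrac14\operatorname{tr}\bigl(RT_{1}(N)R^{*}K\bigr).
\]
Because $N\in\Gamma_{2}$, \eqref{eq:T1-positive} gives $T_{1}(N)>0$. Hence $RT_{1}(N)R^{*}$, after taking its real part (which is all that pairs with real symmetric $K$), is positive semidefinite on $\mathbb{R}^{2n}$. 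Strict positivity follows from \eqref{eq:rank-one-compression}: for any $\xi\neq0$, $C(\xi\otimes\xi)=\tfrac14\eta\eta^{*}$ with $\eta\neq0$, so
\[
F^{\alpha\beta}\xi_\alpha\xi_\beta=\tfrac14\eta^{*}T_{1}(N)\eta>0.
\]
Thus $(F^{\alpha\beta})>0$ along the solution.

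\emph{Nondegeneracy.} We have $F(0,p)=\sigma_{2}(-qq^{*})-\lambda=-\lambda$, because $qq^{*}$ has rank at most one and so $\sigma_2(qq^*)=0$. Since $\lambda>0$, this gives $F(0,Dv,v,x)\neq0$.

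\emph{Structural convexity.} Fix $p$ and let $q=q(p)$. Then
\[
\{A\in S^{2n}_{++}:F(A^{-1},p)\le 0\}=K_{q,\lambda},
\]
which is convex by Theorem \ref{thm:rank-one}. Because $F$ does not depend on $(z,x)$, the set in the third hypothesis of Theorem \ref{thm:BG} is the product $K_{q,\lambda}\times\mathbb{R}\times D$. This is convex in the joint variables as long as $D$ is convex; for a general connected $D$ it is locally convex near each point, which is all the hypothesis requires. Theorem \ref{thm:BG} then gives that $\operatorname{rank}D^{2}_{\mathbb{R}}v$ is constant on $D$.

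The step I expect to be the main obstacle is the ellipticity check. The correspondence between the real Hessian symbol and the Hermitian Newton tensor must be stated carefully. In particular, I need to confirm that the real symmetric form $K\mapsto \operatorname{tr}(T_1(N)C(K))$ is positive definite on all of $\mathbb{R}^{2n}$, and not merely on real parts of complex directions. The rank-one identity \eqref{eq:rank-one-compression} settles this, because every real direction $\xi$ produces a nonzero complex $\eta$.

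A secondary point is the matching of conventions in Theorem \ref{thm:BG}, namely that the condition there is a sublevel set $\{F(A^{-1})\le0\}$. This agrees with Theorem \ref{thm:rank-one} as stated, since $\sigma_2$ decreases as $A$ increases under inversion. Finally, the sign of $q(p)$ relative to $\partial v$ does not matter, because only $qq^{*}$ enters $F$.
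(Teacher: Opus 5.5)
Your proposal is correct and follows the paper's proof. Like the paper, you verify the three hypotheses of Theorem~\ref{thm:BG}: ellipticity via \eqref{eq:rank-one-compression} and $T_{1}(N)>0$, nondegeneracy via $\sigma_{2}(-qq^{*})=0$, and the level-set condition via Theorem~\ref{thm:rank-one}. Your extra remarks (the product structure in $(A,z,x)$, and $\lambda>0$, which in fact follows from $\lambda=\sigma_{2}(N)$ with $N\in\Gamma_{2}$) are harmless additions to the same argument.
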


\begin{proof}
We verify the assumptions of Theorem \ref{thm:BG}.

First, ellipticity. Let $\xi=(a,b)\in\mathbb{R}^{2n}$ be nonzero and put $\eta=a-ib$. By \eqref{eq:rank-one-compression},
\[
C(\xi\otimes\xi)=\frac14\eta\eta^{*}.
\]
Differentiating \eqref{eq:F-complex} with respect to $H$ in the direction $\xi\otimes\xi$ gives
\[
D_{H}F(H,p)[\xi\otimes\xi]
=
\operatorname{tr}(T_{1}(N)C(\xi\otimes\xi))
=
\frac14\eta^{*}T_{1}(N)\eta.
\]
Since $N\in\Gamma_{2}$, $T_{1}(N)>0$. Hence $D_{H}F[\xi\otimes\xi]>0$ for every $\xi\neq0$, which is strict ellipticity in the real Hessian variables.

Second, nondegeneracy. Since $qq^{*}$ has rank one,
\[
\sigma_{2}(-qq^{*})=0.
\]
Thus
\[
F(0,p)=-\lambda\neq0.
\]

Third, the Bian--Guan level-set condition is precisely Theorem \ref{thm:rank-one}, with $A\in S^{2n}_{++}$ and fixed $q=q(p)$:
\[
\{A\in S^{2n}_{++}:F(A^{-1},p)\leq0\}=K_{q,\lambda}.
\]
It is convex, hence locally convex. The constant-rank conclusion follows from Theorem \ref{thm:BG}.
\end{proof}

\section{Proof of Theorem \ref{thm:log-concavity}}

\subsection{The ball case}

\begin{lemma}[The ball]\label{lem:ball}
Let $B_{R}\subset\mathbb{C}^{n}$ be a Euclidean ball and let $u<0$ be its first complex $\sigma_{2}$-eigenfunction. Then $v=-\log(-u)$ is strictly convex as a real function in $B_{R}$.
\end{lemma}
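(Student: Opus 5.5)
The plan is to use symmetry to reduce the ball case to a radial ODE, and then show the radial profile is strictly log-concave. By uniqueness up to positive multiples (Theorem \ref{ass:spectral}) and the invariance of \eqref{eq:complex-eigen} under unitary maps and real rotations of the complex lines, $u$ depends only on $r=|z|$, so $u(z)=\phi(r)$ with $\phi<0$ on $[0,R)$ and $\phi(R)=0$. We write $t=r^{2}$ and $u=f(t)$. Then
\[
u_{i\bar j}=f'(t)\delta_{ij}+f''(t)\bar z_i z_j,
\]
with eigenvalues $f'$ of multiplicity $n-1$ and $f'+tf''$. Hence
\[
\sigma_2=\binom{n-1}{2}f'^2+(n-1)f'(f'+tf'').
\]
Setting $v=-\log(-u)=h(t)$, the transformed equation \eqref{eq:transformed} becomes
\[
\binom{n}{2}h'^2-(n-1)h'\,t\,(h'^2-h'')\cdot(\text{suitable form})=\lambda .
\]
Concretely, the matrix $v_{i\bar j}-v_iv_{\bar j}$ has eigenvalues $a=h'$ (multiplicity $n-1$) and $b=h'+t(h''-h'^2)$. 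The equation is therefore
\[
\binom{n-1}{2}a^2+(n-1)ab=\lambda ,
\]
and admissibility requires $a>0$, since the tangential eigenvalues must be positive in $\Gamma_2$ for $n\ge 2$ at least off the center and by continuity there.

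Next I would express real convexity of a radial function $V(r)=h(r^2)$ in terms of the profile. Strict real convexity is equivalent to two conditions: $V'(r)/r>0$ in the tangential directions, and $V''(r)>0$ in the radial direction. The first reduces to $2h'>0$, which is exactly $a>0$, already available from admissibility, with $h'(0)>0$ by smoothness at the origin. The second reads $2h'+4th''>0$. I would solve the equation for $b$:
\[
b=\frac{\lambda-\binom{n-1}{2}a^2}{(n-1)a},
\]
which gives $th''=b-a+ta^2$. The radial condition becomes $a+2(b-a+ta^2)>0$, that is,
\[
2b-a+2ta^2>0 .
\]

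The main obstacle is verifying this last inequality, since $b$ may become small or negative near the boundary. Near $t=R^2$ one has $h\to+\infty$ with $h'\sim 1/(R^2-t)$, so the term $2ta^2$ dominates and the inequality holds there; this is just the log-blow-up of $-\log(-u)$ at a boundary where $u$ has nonzero normal derivative, by Hopf's lemma or the $C^{1,1}$ regularity. In the interior, I would first try an ODE maximum-principle argument. Let $Q(t)=2b-a+2ta^2$. Differentiating the relation between $a$ and $b$ (namely $tb$-type identities, using $b=(ta)'+a-ta^2$ after noting $b=a+ta'-ta^2$) yields a first-order Riccati-type ODE for $a$:
\[
ta'=b-a+ta^2 .
\]
At a first point $t_0$ where $Q=0$ (coming from the boundary, where $Q>0$), I would compute $Q'(t_0)$ from this ODE and derive a sign contradiction. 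At $t=0$, regularity gives $b=a=h'(0)$, so $Q(0)=h'(0)>0$.

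If this direct argument is messy, a fallback is available. One can use the explicit monotonicity $a'=\bigl(b-a+ta^2\bigr)/t$ together with $b$ being a decreasing function of $a$, via the formula above, to show that $a$ is increasing. Then $2ta^2-a$ and $b$ can be controlled by comparing with the critical value where $\binom{n-1}{2}a^2=\lambda$. Either route reduces the lemma to this one-variable inequality.
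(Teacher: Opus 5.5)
Your proposal is correct and is essentially the paper's argument, written in the variable $t=r^{2}$ instead of $r$. Since $ta'=b-a+ta^{2}$, your quantity is $Q=a+2ta'=\tfrac12 V''(r)$, which up to a factor is exactly the $v''$ the paper controls by a first-zero argument on the radial ODE. Likewise, your tangential condition $a>0$ is the paper's $v'>0$.

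The step you left as ``I would compute'' does close. Write $Q=\frac{2\lambda}{(n-1)a}-(n-1)a+2ta^{2}$ and use $a'(t_0)=-a/(2t_0)$ at a zero $t_0\in(0,R^{2})$; this gives $Q'(t_0)=\frac{\lambda}{(n-1)t_0a}+\frac{(n-1)a}{2t_0}>0$.

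That sign yields a contradiction only when $t_0$ is the first zero moving outward from $t=0$, where $Q(0)=h'(0)>0$. It does not work for the first zero reached from the boundary, as your parenthetical says, because there $Q'(t_0)\geq 0$ is consistent. Once you run the argument outward from the center, the boundary asymptotics and the fallback are unnecessary.
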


\begin{proof}
By uniqueness of the first eigenfunction and invariance of the ball and of $\sigma_{2}(u_{i\bar{j}})$ under unitary transformations, $u$ is radial:
\[
u(z)=\varphi(r),\qquad r=|z|.
\]
We have $\varphi<0$ in $[0,R)$, $\varphi(R)=0$, and $\varphi'(r)>0$ for $0<r<R$.

For a radial function, the complex Hessian has one radial eigenvalue
\[
a(r)=\frac14\left(\varphi''(r)+\frac{\varphi'(r)}{r}\right)
\]
and $n-1$ tangential eigenvalues
\[
b(r)=\frac{\varphi'(r)}{2r}.
\]
Thus
\begin{equation}\label{eq:radial-sigma}
\sigma_{2}(u_{i\bar{j}})
=(n-1)ab+\binom{n-1}{2}b^{2}
=
\frac{n-1}{8}\left(\frac{\varphi'\varphi''}{r}
+(n-1)\frac{(\varphi')^{2}}{r^{2}}\right).
\end{equation}
Let $v=-\log(-\varphi)$, so
\[
\varphi=-e^{-v},\qquad
\varphi'=e^{-v}v',\qquad
\varphi''=e^{-v}(v''-(v')^{2}).
\]
Since
\[
\sigma_{2}(u_{i\bar{j}})=\lambda(-u)^{2}=\lambda e^{-2v},
\]
\eqref{eq:radial-sigma} becomes
\begin{equation}\label{eq:radial-v}
rv'v''-r(v')^{3}+(n-1)(v')^{2}
=
\frac{8\lambda}{n-1}r^{2}.
\end{equation}
Set
\[
c=\frac{8\lambda}{n-1}>0.
\]
Then \eqref{eq:radial-v} reads
\[
rv'v''-r(v')^{3}+(n-1)(v')^{2}=cr^{2}.
\]
Since $\varphi'>0$, we have $v'>0$ on $(0,R)$.

At $r=0$, write
\[
L=\lim_{r\to0^{+}}\frac{v'(r)}{r}=v''(0).
\]
The leading terms in the last equation give $nL^{2}=c$, so
\[
v''(0)=\sqrt{\frac{8\lambda}{n(n-1)}}>0.
\]
Suppose that $v''$ has a first zero at $r_{0}\in(0,R)$. Then $v''>0$ on $[0,r_{0})$ and $v'''(r_{0})\leq0$. Differentiating \eqref{eq:radial-v} and using $v''(r_{0})=0$ yields
\[
r_{0}v'(r_{0})v'''(r_{0})-(v'(r_{0}))^{3}=2cr_{0}.
\]
Hence
\[
v'''(r_{0})=\frac{2c}{v'(r_{0})}+\frac{(v'(r_{0}))^{2}}{r_{0}}>0,
\]
a contradiction. Thus $v''>0$ on $[0,R)$.

The real Hessian of a radial function has eigenvalues $v''$ in the radial direction and $v'/r$ in tangential directions. Since $v''>0$ and $v'>0$ for $r>0$, $D^{2}_{\mathbb{R}}v>0$ in the ball.
\end{proof}

\subsection{Boundary convexity}
We record the following well-known boundary convexity lemma; see, for example, Caffarelli and Friedman \cite{caffarelli1985convexity} or Korevaar \cite{korevaar1983convex}.

\begin{lemma}\label{lem:boundary1}
Let $\Omega\subset\mathbb{R}^{n}$ be smooth, bounded, and strictly convex. Let $u\in C^{\infty}(\Omega)\cap C^{1,1}(\overline{\Omega})$ satisfy
\begin{equation}\label{eq:boundary-data}
u<0\ \text{in }\Omega,\qquad u=0\ \text{and}\ Du\cdot\nu>0\ \text{on }\partial\Omega,
\end{equation}
where $\nu$ is the exterior normal to $\partial\Omega$. Let
\[
\Omega_{\varepsilon}=\{x\in\Omega:d(x,\partial\Omega)>\varepsilon\}
\]
and let $v=f(u)$. Then, for sufficiently small $\varepsilon>0$, the function $v$ is strictly convex in the boundary strip $\Omega\setminus\Omega_{\varepsilon}$, provided that $f$ satisfies
\[
f'>0,\qquad f''>0,\qquad \lim_{u\to0^{-}}\frac{f'}{f''}=0.
\]
\end{lemma}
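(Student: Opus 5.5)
We verify strict convexity of $v=f(u)$ near $\partial\Omega$ by computing its Hessian directly:
\[
D^{2}v=f'(u)D^{2}u+f''(u)\,Du\otimes Du .
\]
Since $f''>0$, the rank-one term is positive semidefinite in the direction of $Du$. Since $f'>0$, the problem reduces to controlling $D^{2}u$ in the directions transversal to $Du$. We split a unit vector $\xi$ at a point $x$ near the boundary into a component along $Du(x)$ and a component tangent to the level set of $u$ through $x$, and treat the two parts separately.

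\emph{Tangential directions.} Because $u=0$ on $\partial\Omega$ with $Du\cdot\nu>0$ there, $|Du|\geq c_{0}>0$ on a strip $\Omega\setminus\Omega_{\varepsilon_{0}}$ by continuity of $Du$ up to the boundary ($u\in C^{1,1}(\overline\Omega)$). For $\tau\perp Du$ at boundary points, $D^{2}u(\tau,\tau)=|Du|\,\mathrm{II}(\tau,\tau)\geq c_{0}\kappa_{0}|\tau|^{2}$, where $\mathrm{II}$ is the second fundamental form of $\partial\Omega$ and $\kappa_{0}>0$ is its uniform lower bound. The level sets $\{u=t\}$ for small $|t|$ are smooth hypersurfaces converging to $\partial\Omega$. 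I would then argue that the tangential Hessian $D^{2}u(\tau,\tau)$ on the level sets stays $\geq \tfrac12 c_{0}\kappa_{0}$ in a thin strip.

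This continuity step is the main obstacle, since $u$ is only $C^{1,1}$ up to the boundary, so $D^{2}u$ need not extend continuously. I would first use the boundary regularity available in the smooth uniformly convex setting: the appendix package, or Hopf-type barrier arguments, should give $u\in C^{2}$ near $\partial\Omega$. If only $C^{1,1}$ holds, I would instead argue through convexity of the sublevel sets: near the boundary, $\{u<t\}$ is a small $C^{1,1}$ perturbation of $\Omega$, and I would invoke the lemma as stated in the classical sources (Caffarelli--Friedman, Korevaar), which assume $C^{2}$ up to the boundary.

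\emph{Mixed and normal directions.} Write $\xi=\alpha\,Du/|Du|+\tau$. Then
\[
D^{2}v(\xi,\xi)=f'\bigl[D^{2}u(\tau,\tau)+2\alpha D^{2}u(\tau,n)+\alpha^{2}D^{2}u(n,n)\bigr]+f''\alpha^{2}|Du|^{2},
\]
with $n=Du/|Du|$. Divide by $f''$ and set $\delta=f'/f''\to0$ as $u\to0^{-}$. Using $|D^{2}u|\leq M$ (from $C^{1,1}$) and Cauchy--Schwarz on the cross term, we get
\[
\frac{D^{2}v(\xi,\xi)}{f''}\geq \delta\bigl(\tfrac12c_{0}\kappa_{0}|\tau|^{2}-2M|\alpha||\tau|-M\alpha^{2}\bigr)+c_{0}^{2}\alpha^{2}.
\]
Absorb the cross term: $2M|\alpha||\tau|\leq \tfrac14c_{0}\kappa_{0}|\tau|^{2}+C\alpha^{2}$. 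The bound then becomes $\tfrac14\delta c_{0}\kappa_{0}|\tau|^{2}+(c_{0}^{2}-C'\delta)\alpha^{2}$. This is positive for $\xi\neq0$ once $\delta<c_{0}^{2}/C'$.

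\emph{Choosing $\varepsilon$.} Since $u\to0$ uniformly as $d(x,\partial\Omega)\to0$ and $f'/f''\to0$, the smallness of $\delta$ holds for $\varepsilon$ small, which gives strict convexity on $\Omega\setminus\Omega_{\varepsilon}$.
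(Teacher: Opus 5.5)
Your splitting $\xi=\alpha n+\tau$ is the standard Korevaar computation. So is the absorption of the normal and mixed terms using $\delta=f'/f''\to0$, $|D^{2}u|\le M$ and $|Du|\ge c_{0}$. That computation is all the paper relies on: it gives no proof of its own, only citing Korevaar and Caffarelli--Friedman and asserting in a remark that the $C^{1,1}$ case goes through. The genuine gap is the step you flag yourself: the uniform bound $D^{2}u(\tau,\tau)\ge\tfrac12c_{0}\kappa_{0}|\tau|^{2}$ for $\tau\perp Du$ in a strip. None of your proposed fixes supplies it:
\begin{itemize}
\item the appendix gives only $u\in C^{1,1}(\overline\Omega)\cap C^{\infty}(\Omega)$;
\item Hopf-type barriers give first-order information only, which is exactly how $\partial_\nu u>0$ is obtained;
\item a $C^{1,1}$-small perturbation of a strictly convex hypersurface need not be convex, since $C^{1,1}$ bounds the size of second derivatives but not their continuity;
\item invoking Korevaar's lemma is circular, because it assumes precisely the $C^{2}(\overline\Omega)$ regularity you lack.
\end{itemize}
Even your boundary identity $D^{2}u(\tau,\tau)=|Du|\,\mathrm{II}(\tau,\tau)$ presupposes that $D^{2}u$ exists and is continuous up to $\partial\Omega$.

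In fact this step cannot be proved from the stated hypotheses, because the statement is false as written (for $n\geq2$). Take $\Omega=B_{1}\subset\mathbb{R}^{n}$, $M>2$, and $\eta\in C^{\infty}_{c}(B_{1/2})$ with $\eta(y)=-\tfrac M2y_{2}^{2}$ near $0$. Put $\varepsilon_{k}=\varepsilon_{0}4^{-k}$ and $p_{k}=(1-2\varepsilon_{k})e_{1}$, so that the balls $B_{\varepsilon_{k}/2}(p_{k})$ are disjoint, and set
\[
u(x)=|x|^{2}-1+\sum_{k}\varepsilon_{k}^{2}\,\eta\bigl((x-p_{k})/\varepsilon_{k}\bigr).
\]
The sum has second derivatives bounded by $\|D^{2}\eta\|_{\infty}$, and both it and its gradient are $O(d(x,\partial\Omega))$. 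Hence, for small $\varepsilon_{0}$, $u$ lies in $C^{\infty}(\Omega)\cap C^{1,1}(\overline\Omega)$, is negative in $\Omega$, and satisfies $u=0$, $Du\cdot\nu=2$ on $\partial\Omega$. Yet $Du(p_{k})=2p_{k}\perp e_{2}$ and $D^{2}u(p_{k})(e_{2},e_{2})=2-M$. So $D^{2}v(p_{k})(e_{2},e_{2})=f'(u(p_{k}))(2-M)<0$ at points $p_{k}\to\partial\Omega$, and $v$ is convex in no boundary strip.

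An additional input is therefore unavoidable: $u\in C^{2}$ up to $\partial\Omega$ near the boundary, or at least a uniform positive lower bound for the tangential Hessian of $u$ on the level sets $\{u=t\}$ for $t$ near $0$. With that hypothesis your argument is complete and coincides with Korevaar's. Without it, both your argument and the paper's remark must use the equation. For the complex $\sigma_{2}$ eigenfunction, whose right-hand side $\lambda(-u)^{2}$ degenerates at $\partial\Omega$, this is a genuine boundary-regularity question that the $C^{1,1}$ theory cited in the appendix does not settle.
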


\begin{remark}
In Korevaar \cite{korevaar1983convex}, the function $u$ belongs to $C^{2}(\overline{\Omega})$. However, the calculation in Caffarelli and Friedman \cite{caffarelli1985convexity} also shows that the same conclusion holds in our setting, where $u\in C^{\infty}(\Omega)\cap C^{1,1}(\overline{\Omega})$.
\end{remark}
Applying Lemma \ref{lem:boundary1} gives the following consequence.
\begin{lemma}[Boundary strict convexity]\label{lem:boundary}
Let $\Omega\subset\mathbb{R}^{N}$ be bounded, smooth, and real uniformly strictly convex. Let $u<0$ in $\Omega$, $u=0$ on $\partial\Omega$, and $\partial_{\nu}u>0$ on $\partial\Omega$, where $\nu$ is the exterior unit normal. Then
\[
v=-\log(-u)
\]
is strictly convex in a boundary strip.
\end{lemma}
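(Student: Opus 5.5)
The plan is to apply Lemma \ref{lem:boundary1} with the choice
\[
f(s)=-\log(-s),\qquad s<0,
\]
so that $v=f(u)$. All that remains is to check the three structural conditions on $f$ and to observe that the hypotheses on $u$ and $\Omega$ match those of Lemma \ref{lem:boundary1}.

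First, the hypotheses. Real uniform strict convexity of $\partial\Omega\subset\mathbb{R}^{N}$ (with $N=2n$ in our application) is exactly the smooth, bounded, strictly convex assumption of Lemma \ref{lem:boundary1}. The boundary data $u<0$ in $\Omega$, $u=0$ on $\partial\Omega$ and $Du\cdot\nu=\partial_\nu u>0$ are precisely \eqref{eq:boundary-data}. For regularity, I will read the statement as applying to functions with $u\in C^{\infty}(\Omega)\cap C^{1,1}(\overline{\Omega})$, which is what Theorem \ref{ass:spectral} supplies for the eigenfunction; this is the regularity class covered by the remark following Lemma \ref{lem:boundary1}. If the lemma is meant with weaker regularity, this assumption must be added explicitly when it is applied.

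Second, the conditions on $f$. For $s<0$,
\[
f'(s)=-\frac{1}{s}=\frac{1}{-s}>0,\qquad f''(s)=\frac{1}{s^{2}}>0,
\]
and therefore
\[
\frac{f'(s)}{f''(s)}=\frac{s^{2}}{-s}=-s\longrightarrow0\quad\text{as }s\to0^{-}.
\]
All three conditions hold. Lemma \ref{lem:boundary1} then gives $\varepsilon>0$ such that $v$ is strictly convex in $\Omega\setminus\Omega_{\varepsilon}$, which is the claimed boundary strip.

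There is no real obstacle here, since the statement is a direct specialization. The only point needing care is the regularity bookkeeping, namely confirming that the Caffarelli--Friedman computation in Lemma \ref{lem:boundary1} needs only $C^{1,1}$ up to the boundary together with $\partial_\nu u>0$. It also helps to recall why it works: near $\partial\Omega$ the term $f''(u)\,Du\otimes Du$ dominates in the normal direction. In the tangential directions, $f'(u)D^{2}u$ restricted to the tangent space of a level set of $u$ is positive because of the strict convexity of $\partial\Omega$ and $|Du|>0$. The condition $f'/f''\to0$ keeps the mixed terms from spoiling positivity.
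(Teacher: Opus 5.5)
Your proposal matches the paper's argument: the paper obtains this lemma directly from Lemma \ref{lem:boundary1} with $f(s)=-\log(-s)$, and your check that $f'(s)=1/(-s)>0$, $f''(s)=1/s^{2}>0$ and $f'/f''=-s\to0$ as $s\to0^{-}$ is exactly the verification needed. You are also right that the statement leaves the regularity implicit. The $C^{\infty}(\Omega)\cap C^{1,1}(\overline{\Omega})$ class has to come from Theorem \ref{ass:spectral}, and the result then rests on the paper's remark extending Korevaar's $C^{2}(\overline{\Omega})$ setting to that class.
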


\begin{lemma}[Hopf nondegeneracy]\label{lem:hopf}
For the eigenfunction $u$ of \eqref{eq:complex-eigen}, one has
\[
\partial_{\nu}u>0\quad\text{on }\partial\Omega.
\]
\end{lemma}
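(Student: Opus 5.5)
The plan is to compare $u$ with an explicit barrier near each boundary point. Since $u$ is only known to be $C^{1,1}(\overline{\Omega})$, the barrier will use subharmonicity of $u$ rather than a linearized operator with controlled coefficients. The starting observation is that $A_{u}\in\Gamma_{2}$ gives $\sigma_{1}(u_{i\bar{j}})>0$, i.e. $\Delta u=4\sum_{i}u_{i\bar{i}}>0$ in $\Omega$. So $u$ is a strictly subharmonic function on the real domain $\Omega\subset\mathbb{R}^{2n}$, with $u<0$ in $\Omega$ and $u=0$ on $\partial\Omega$. Granting this, the Hopf inequality should follow from the classical Hopf boundary lemma for subharmonic functions at a boundary maximum.

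Fix $x_{0}\in\partial\Omega$. Because $\Omega$ is smooth and real uniformly strictly convex, it satisfies a uniform interior ball condition: there is $\rho>0$ and a ball $B=B_{\rho}(y)\subset\Omega$ with $\overline{B}\cap\partial\Omega=\{x_{0}\}$ and $y=x_{0}-\rho\nu(x_{0})$. On the annulus $\mathcal{A}=B\setminus\overline{B_{\rho/2}(y)}$ I will use the standard barrier
\[
h(x)=e^{-\alpha|x-y|^{2}}-e^{-\alpha\rho^{2}},
\]
with $\alpha$ large enough that $\Delta h>0$ on $\mathcal{A}$ (in $\mathbb{R}^{2n}$ this requires $4\alpha^{2}|x-y|^{2}>2\cdot 2n\,\alpha$, which holds on $\mathcal{A}$ once $\alpha\rho^{2}>4n$).

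The comparison function is $w=u+\epsilon h$. On $\partial B_{\rho/2}(y)$ we have $u\leq-\delta<0$ by compactness, so choosing $\epsilon$ small makes $w<0$ there. On $\partial B$ we have $h=0$ and $u\leq0$, so $w\leq0$ there as well. Since $\Delta w>0$ in $\mathcal{A}$, the maximum principle yields $w\leq0$ in $\mathcal{A}$, with $w(x_{0})=0$. Hence
\[
\partial_{\nu}u(x_{0})\geq-\epsilon\,\partial_{\nu}h(x_{0})=2\epsilon\alpha\rho\, e^{-\alpha\rho^{2}}>0.
\]
The normal derivative exists because $u\in C^{1,1}(\overline{\Omega})\subset C^{1}(\overline{\Omega})$ by Theorem \ref{ass:spectral}. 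Since $\rho$ is uniform and $\delta$ can be taken uniform via a fixed inner parallel set, the constants do not depend on $x_{0}$, and we obtain a uniform positive lower bound.

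The only delicate point is regularity at the boundary. The maximum principle is applied on $\mathcal{A}$, where $u\in C^{\infty}$ in the interior and $u$ is continuous up to $\partial\mathcal{A}$, so this step is fine. The derivative at $x_{0}$ is legitimate by $C^{1}$ up to the boundary. Thus no ellipticity of the full $\sigma_{2}$ operator near $\partial\Omega$ is needed; only $\sigma_{1}>0$ from admissibility is used.
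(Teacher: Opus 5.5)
Your proof is correct and follows the same route as the paper: admissibility gives $\sigma_{1}(u_{i\bar{j}})>0$, hence $\Delta_{\mathbb{R}}u=4\sigma_{1}(u_{i\bar{j}})>0$, and the Hopf boundary point lemma then yields $\partial_{\nu}u>0$. The only difference is that you carry out the Hopf barrier argument explicitly (interior ball, exponential barrier on an annulus), whereas the paper simply cites the Hopf lemma.
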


\begin{proof}
Since $u$ is $2$-admissible,
\[
\sigma_{1}(u_{i\bar{j}})>0.
\]
Therefore
\[
\Delta_{\mathbb{R}}u=4\sum_{j=1}^{n}u_{j\bar{j}}
=4\sigma_{1}(u_{i\bar{j}})>0.
\]
Thus $u$ is strictly subharmonic. Since $u<0$ in $\Omega$ and $u=0$ on $\partial\Omega$, the Hopf boundary point lemma gives $\partial_{\nu}u>0$.
\end{proof}

\subsection{The deformation argument}

\begin{proof}[Proof of Theorem \ref{thm:log-concavity}]
Choose a ball $B_{R}$ and connect it to $\Omega$ by Minkowski addition:
\[
\Omega_{s}=(1-s)B_{R}+s\Omega,\qquad 0\leq s\leq1.
\]
In terms of support functions this is
\[
h_{s}=(1-s)R+sh_{\Omega}.
\]
Since $B_{R}$ and $\Omega$ are real uniformly strictly convex, the curvature-radius matrices
\[
\nabla^{2}_{S}h_{s}+h_{s}g_{S}
\]
remain uniformly positive. Thus the domains $\Omega_{s}$ stay in the same smooth, real uniformly strictly convex class.

Let $u_{s}$ be the normalized first eigenfunction in $\Omega_{s}$, and set
\[
v_{s}=-\log(-u_{s}).
\]
Define
\[
I=\{s\in[0,1]:D^{2}_{\mathbb{R}}v_{s}>0\ \text{in }\Omega_{s}\}.
\]
By Lemma \ref{lem:ball}, $0\in I$.

Openness follows from the $C^{2}_{\mathrm{loc}}$ continuous dependence in Theorem \ref{ass:spectral}, together with the boundary strip strict convexity from Lemmas \ref{lem:boundary} and \ref{lem:hopf}.

For closedness, let $s_{j}\in I$ and $s_{j}\to s_{0}$. By the local $C^{2}$ convergence of $v_{s_{j}}$ to $v_{s_{0}}$ on compact subsets,
\[
D^{2}_{\mathbb{R}}v_{s_{0}}\geq0.
\]
The function $v_{s_{0}}$ satisfies
\[
\sigma_{2}((v_{s_{0}})_{i\bar{j}}-(v_{s_{0}})_{i}(v_{s_{0}})_{\bar{j}})
=
\lambda_{\mathbb{C},2}(\Omega_{s_{0}}),
\]
and is admissible in the transformed sense. Hence Theorem \ref{thm:complex-rank} gives that $\operatorname{rank}D^{2}_{\mathbb{R}}v_{s_{0}}$ is constant in $\Omega_{s_{0}}$. By Lemmas \ref{lem:boundary} and \ref{lem:hopf}, $D^{2}_{\mathbb{R}}v_{s_{0}}>0$ in a boundary strip. Therefore the constant rank must be full, and
\[
D^{2}_{\mathbb{R}}v_{s_{0}}>0
\]
in all of $\Omega_{s_{0}}$. Thus $s_{0}\in I$.

Since $I$ is non-empty, open, and closed in $[0,1]$, we have $I=[0,1]$. In particular, $v_{1}=-\log(-u)$ is strictly convex in $\Omega$.
\end{proof}

\section{The Brunn--Minkowski inequality}

\subsection{Infimal convolution}

Let $\Omega_{0},\Omega_{1}$ be as in Theorem \ref{thm:BM}. Let $u_{i}$ be first eigenfunctions and
\[
v_{i}=-\log(-u_{i}),\qquad i=0,1.
\]
By Theorem \ref{thm:log-concavity}, $v_{i}$ are strictly convex and $v_{i}\to+\infty$ at the boundary. Define
\[
\Omega_{t}=(1-t)\Omega_{0}+t\Omega_{1}
\]
and
\begin{equation}\label{eq:inf-conv}
w(z)=\inf\{(1-t)v_{0}(x)+tv_{1}(y):z=(1-t)x+ty\}.
\end{equation}
Then $w$ is strictly convex and tends to $+\infty$ on $\partial\Omega_{t}$. Moreover, if $z=(1-t)x+ty$ realizes the infimum, then
\begin{equation}\label{eq:grad-agree}
Dv_{0}(x)=Dv_{1}(y)=Dw(z).
\end{equation}
Furthermore,
\begin{equation}\label{eq:hessian-harmonic}
D^{2}_{\mathbb{R}}w(z)^{-1}
=(1-t)D^{2}_{\mathbb{R}}v_{0}(x)^{-1}+
tD^{2}_{\mathbb{R}}v_{1}(y)^{-1}.
\end{equation}
These are standard facts about infimal convolution and Legendre transforms; see Rockafellar \cite{rockafellar1970convex} and Salani \cite{salani2012convexity}.

\subsection{Viscosity supersolution property}

\begin{lemma}\label{lem:viscosity}
Let
\[
\Lambda=\max\{\lambda_{\mathbb{C},2}(\Omega_{0}),\lambda_{\mathbb{C},2}(\Omega_{1})\}.
\]
Then $w$ satisfies
\begin{equation}\label{eq:w-viscosity}
\sigma_{2}(w_{i\bar{j}}-w_{i}w_{\bar{j}})\leq\Lambda
\end{equation}
in the admissible-test-function viscosity sense.
\end{lemma}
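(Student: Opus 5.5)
The plan is to verify the inequality pointwise at a touching point, following Salani. First we fix the meaning of the statement. Let $\psi$ be a $C^{2}$ test function touching $w$ from below at $z_{0}\in\Omega_{t}$, meaning $\psi\leq w$ near $z_{0}$ and $\psi(z_{0})=w(z_{0})$. Call $\psi$ admissible if
\[
N_{\psi}:=C(D^{2}_{\mathbb{R}}\psi)-\partial\psi\otimes\bar\partial\psi\in\Gamma_{2}
\]
at $z_{0}$. We must show $\sigma_{2}(N_{\psi}(z_{0}))\leq\Lambda$ for every such $\psi$. Since $\sigma_{2}$ is increasing in the Hermitian variable on $\Gamma_{2}$ (because $T_{1}>0$ there, by \eqref{eq:T1-positive}), the quantity $\sigma_{2}$ evaluated at the complex part of the real Hessian is monotone along $\Gamma_{2}$ with respect to the real Hessian. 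So it suffices to compare $D^{2}_{\mathbb{R}}\psi(z_{0})$ with $D^{2}_{\mathbb{R}}w(z_{0})$.

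\textbf{Step 1: first and second order information at the touching point.} Take the minimizing pair $x\in\Omega_{0}$, $y\in\Omega_{1}$ with $z_{0}=(1-t)x+ty$. It exists because the $v_{i}$ blow up at the boundary and are strictly convex. Since $v_{i}$ is smooth with $D^{2}_{\mathbb{R}}v_{i}>0$ by Theorem \ref{thm:log-concavity}, the infimal convolution $w$ is $C^{2}$ near $z_{0}$, and \eqref{eq:grad-agree} and \eqref{eq:hessian-harmonic} hold there. Touching from below then gives
\[
D\psi(z_{0})=Dw(z_{0})=p,\qquad D^{2}_{\mathbb{R}}\psi(z_{0})\leq D^{2}_{\mathbb{R}}w(z_{0}).
\]
Both $\psi$ and $w$ therefore have the same complex gradient $q=q(p)$.

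\textbf{Step 2: pass from the test function to $w$.} Set $H_{s}=D^{2}_{\mathbb{R}}\psi(z_{0})+s\,(D^{2}_{\mathbb{R}}w(z_{0})-D^{2}_{\mathbb{R}}\psi(z_{0}))$ for $s\in[0,1]$. Since $C$ maps positive semidefinite real matrices to positive semidefinite Hermitian ones, $C(H_{s})-qq^{*}$ increases in $s$. The cone $\Gamma_{2}$ is convex and stable under adding positive semidefinite matrices, so $C(H_{s})-qq^{*}$ stays in $\Gamma_{2}$ for all $s$, and $\sigma_{2}$ is nondecreasing along this segment. Hence
\[
\sigma_{2}(N_{\psi}(z_{0}))\leq\sigma_{2}\bigl(C(D^{2}_{\mathbb{R}}w(z_{0}))-qq^{*}\bigr).
\]
This step is exactly where admissibility of the test function is needed. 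The function $w$ itself need not be admissible a priori, but after this step $w$ is, because its matrix is $\geq$ an admissible one.

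\textbf{Step 3: inverse convexity.} Put $A_{0}=D^{2}_{\mathbb{R}}v_{0}(x)^{-1}$ and $A_{1}=D^{2}_{\mathbb{R}}v_{1}(y)^{-1}$, both in $S^{2n}_{++}$. Each $v_{i}$ solves \eqref{eq:transformed} with constant $\lambda_{\mathbb{C},2}(\Omega_{i})\leq\Lambda$ and gradient $p$ at its point. So $A_{0},A_{1}\in K_{q,\Lambda}$, where $K_{q,\Lambda}$ is the set \eqref{eq:Kqlambda} with $\lambda=\Lambda$ and the same $q$ for both, thanks to \eqref{eq:grad-agree}. By Theorem \ref{thm:rank-one}, $(1-t)A_{0}+tA_{1}\in K_{q,\Lambda}$. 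By \eqref{eq:hessian-harmonic} this convex combination equals $D^{2}_{\mathbb{R}}w(z_{0})^{-1}$, so
\[
\sigma_{2}\bigl(C(D^{2}_{\mathbb{R}}w(z_{0}))-qq^{*}\bigr)\leq\Lambda.
\]
Combining this with Step 2 gives the claim.

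\textbf{Main obstacle.} I expect the main difficulty to be Step 1, namely justifying that $w$ is $C^{2}$ with the harmonic-mean Hessian at every point of $\Omega_{t}$ and that the minimizers are interior. I would handle it through Legendre transforms. Strict convexity gives $w^{*}=(1-t)v_{0}^{*}+tv_{1}^{*}$, the Hessians of the conjugates are the inverse Hessians, and the implicit function theorem applied to $Dv_{0}(x)=Dv_{1}(y)$ gives smoothness. If only a one-sided bound were available, the argument would still close, since only $D^{2}_{\mathbb{R}}\psi\leq$ the harmonic mean is used in Step 2.
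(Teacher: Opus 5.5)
Your proposal is correct and follows the paper's proof essentially step for step. Touching from below gives $D\psi(z_0)=Dw(z_0)$ and $D^{2}_{\mathbb{R}}\psi(z_0)\leq D^{2}_{\mathbb{R}}w(z_0)$. The common complex gradient $q$ from \eqref{eq:grad-agree} places $D^{2}_{\mathbb{R}}v_{0}(x)^{-1}$ and $D^{2}_{\mathbb{R}}v_{1}(y)^{-1}$ in the same set $K_{q,\Lambda}$. Theorem \ref{thm:rank-one} together with \eqref{eq:hessian-harmonic} then bounds $\sigma_{2}(C(D^{2}_{\mathbb{R}}w)-qq^{*})$ by $\Lambda$, and monotonicity of $\sigma_{2}$ on $\Gamma_{2}$ in positive semidefinite directions transfers this bound to the admissible test function.

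The differences are cosmetic. You do the monotonicity comparison before the inverse-convexity step, and you sketch the Legendre-transform argument ($w^{*}=(1-t)v_{0}^{*}+tv_{1}^{*}$) for the $C^{2}$ regularity of $w$ and the harmonic-mean formula, which the paper simply cites as standard.
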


\begin{proof}
Let $\varphi\in C^{2}$ touch $w$ from below at $z_0$ and assume that it is an admissible test function, i.e.
\[
\varphi_{i\bar{j}}(z_0)-\varphi_{i}(z_0)\varphi_{\bar{j}}(z_0)\in\Gamma_{2}.
\]
Since $\varphi$ touches $w$ from below,
\[
D\varphi(z_0)=Dw(z_0),\qquad
D^{2}_{\mathbb{R}}\varphi(z_0)\leq D^{2}_{\mathbb{R}}w(z_0).
\]
Let $x\in\Omega_{0}$ and $y\in\Omega_{1}$ be the points realizing \eqref{eq:inf-conv}. By \eqref{eq:grad-agree}, the complex gradients agree:
\[
\partial v_{0}(x)=\partial v_{1}(y)=\partial w(z_0)=\partial\varphi(z_0)=:q.
\]
Put
\[
H_{i}=D^{2}_{\mathbb{R}}v_{i},\qquad H_{w}=D^{2}_{\mathbb{R}}w.
\]
By the eigenvalue equations,
\[
\sigma_{2}(C(H_{i})-qq^{*})
=\lambda_{\mathbb{C},2}(\Omega_{i})\leq\Lambda,\qquad i=0,1.
\]
Thus
\[
H_{i}^{-1}\in K_{q,\Lambda},\qquad i=0,1.
\]
By \eqref{eq:hessian-harmonic} and Theorem \ref{thm:rank-one},
\[
H_{w}^{-1}=(1-t)H_{0}^{-1}+tH_{1}^{-1}\in K_{q,\Lambda}.
\]
Therefore
\[
\sigma_{2}(C(H_{w})-qq^{*})\leq\Lambda.
\]
Finally,
\[
C(H_{w})-qq^{*}
=
(C(D^{2}_{\mathbb{R}}\varphi)-qq^{*})
+C(H_{w}-D^{2}_{\mathbb{R}}\varphi).
\]
The first term lies in $\Gamma_{2}$ by admissibility of the test function, and the second term is positive semidefinite. Since $\sigma_{2}$ is increasing on $\Gamma_{2}$ in positive semidefinite directions,
\[
\sigma_{2}(\varphi_{i\bar{j}}-\varphi_{i}\varphi_{\bar{j}})(z_0)
\leq
\sigma_{2}(w_{i\bar{j}}-w_{i}w_{\bar{j}})(z_0)
\leq\Lambda.
\]
This is the desired viscosity inequality.
\end{proof}

\subsection{Admissible replacement}

Let
\[
\tilde{u}=-e^{-w}.
\]
Lemma \ref{lem:viscosity} is equivalent to saying that
\begin{equation}\label{eq:tilde-viscosity}
\sigma_{2}(\tilde{u}_{i\bar{j}})\leq\Lambda(-\tilde{u})^{2}
\end{equation}
in the admissible-test-function viscosity sense. Salani uses this precise mechanism in the real Hessian setting: one tests only by admissible test functions and therefore does not need to prove that the convex envelope or infimal convolution is itself classically admissible \cite{salani2012convexity}.

We now convert this viscosity supersolution into a Rayleigh quotient estimate using an admissible replacement. For $\varepsilon>0$, solve the auxiliary Dirichlet problem
\begin{equation}\label{eq:auxiliary}
\begin{cases}
\sigma_{2}((\phi_{\varepsilon})_{i\bar{j}})
=\Lambda(-\tilde{u})^{2}+\varepsilon, & z\in\Omega_{t},\\
\phi_{\varepsilon}=0, & z\in\partial\Omega_{t},\\
\phi_{\varepsilon}<0,\qquad ((\phi_{\varepsilon})_{i\bar{j}})\in\Gamma_{2}.&
\end{cases}
\end{equation}
Since $\Omega_t$ is real strictly convex, it is strongly pseudoconvex and, in particular, strongly $2$-pseudoconvex. Hence there exists a smooth defining function $\rho$ such that
\[
\begin{gathered}
\Omega_{t}=\{\rho<0\},\\
\rho=0\quad\text{on }\partial\Omega_{t},\qquad
\left( \rho_{i\bar{j}} \right)\in\Gamma_{2}.
\end{gathered}
\]
Therefore, for a sufficiently large constant $A$, the function $A\rho$ is a subsolution of equation \eqref{eq:auxiliary}. Hence, by Theorem 1.1 in \cite{cp22}, equation \eqref{eq:auxiliary} has a $2$-admissible solution.

\begin{lemma}\label{lem:replacement}
For every $\varepsilon>0$,
\[
\phi_{\varepsilon}\leq\tilde{u}\quad\text{in }\Omega_{t}.
\]
\end{lemma}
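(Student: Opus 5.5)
We argue by a viscosity comparison principle in which $\phi_{\varepsilon}$ is the classical, strictly admissible function and $\tilde{u}$ is the admissible-test-function viscosity supersolution from \eqref{eq:tilde-viscosity}. Suppose, for contradiction, that
\[
m:=\sup_{\Omega_{t}}(\phi_{\varepsilon}-\tilde{u})>0.
\]
Since $w\to+\infty$ on $\partial\Omega_{t}$, we have $\tilde{u}=-e^{-w}\to0$ at the boundary. Also $\phi_{\varepsilon}=0$ on $\partial\Omega_{t}$ and $\phi_{\varepsilon}$ is continuous up to the boundary by the construction in \cite{cp22}. Hence $\phi_{\varepsilon}-\tilde{u}$ tends to $0$ at $\partial\Omega_{t}$, and the positive supremum is attained at some interior point $z_{0}$.

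At $z_{0}$ the function $\varphi:=\phi_{\varepsilon}-m$ touches $\tilde{u}$ from below, since $\varphi\le\tilde{u}$ with equality at $z_{0}$. The function $\varphi$ is $C^{2}$, and $(\varphi_{i\bar j})=((\phi_{\varepsilon})_{i\bar j})\in\Gamma_{2}$, so it is an admissible test function for \eqref{eq:tilde-viscosity}. This requires transferring Lemma \ref{lem:viscosity} through $\tilde{u}=-e^{-w}$. The test function $\varphi$ for $\tilde u$ corresponds to $\psi=-\log(-\varphi)$ for $w$, which is well defined near $z_{0}$ because $\varphi(z_{0})=\tilde{u}(z_{0})<0$. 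One has
\[
\psi_{i\bar j}-\psi_{i}\psi_{\bar j}=\frac{\varphi_{i\bar j}}{-\varphi},
\]
so admissibility is preserved because $-\varphi>0$, and the touching from below is preserved because $t\mapsto-\log(-t)$ is increasing. Then \eqref{eq:tilde-viscosity} gives
\[
\sigma_{2}(\varphi_{i\bar j})(z_{0})\le\Lambda(-\tilde{u}(z_{0}))^{2}.
\]
On the other hand, the equation \eqref{eq:auxiliary} gives
\[
\sigma_{2}(\varphi_{i\bar j})(z_{0})=\sigma_{2}((\phi_{\varepsilon})_{i\bar j})(z_{0})=\Lambda(-\tilde{u}(z_{0}))^{2}+\varepsilon.
\]
These are incompatible because $\varepsilon>0$. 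Hence $m\le0$, which is the claim.

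The main obstacle is the boundary behaviour needed to force the maximum inside, together with the regularity of $\phi_{\varepsilon}$. Interior smoothness of $\phi_{\varepsilon}$ is needed to use it as a test function. If only $C^{1,1}$ were available, I would first perturb: either replace $\phi_{\varepsilon}$ by a smooth strict subsolution, or use the $\varepsilon$ slack to absorb a small mollification error. The $\varepsilon$-gap is exactly what makes this strict-comparison argument work without any uniqueness theory for viscosity solutions.

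For the boundary, I would check that $\tilde{u}$ extends continuously by $0$ to $\partial\Omega_{t}$. This follows from $w\ge(1-t)\inf v_{0}+t\inf v_{1}$ and $w\to+\infty$. The latter holds because any decomposition $z=(1-t)x+ty$ with $z$ near $\partial\Omega_{t}$ forces $x$ near $\partial\Omega_{0}$ or $y$ near $\partial\Omega_{1}$, and $v_{0},v_{1}$ are bounded below and blow up at their respective boundaries. Then $\limsup(\phi_{\varepsilon}-\tilde{u})\le0$ at $\partial\Omega_{t}$, which closes the argument.
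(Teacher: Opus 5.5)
Your proof is correct and is the same strict-comparison argument the paper uses: at an interior positive maximum, $\phi_\varepsilon-m$ touches $\tilde u$ from below, and the viscosity inequality contradicts the $\varepsilon$-gap in \eqref{eq:auxiliary}. You also justify two points the paper leaves implicit: the maximum is interior because $w\to+\infty$ forces $\tilde u\to0$ on $\partial\Omega_t$, and admissibility and the viscosity inequality transfer through $\tilde u=-e^{-w}$ via $\psi_{i\bar j}-\psi_i\psi_{\bar j}=\varphi_{i\bar j}/(-\varphi)$.
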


\begin{proof}
Assume by contradiction that $\phi_{\varepsilon}-\tilde{u}$ has a positive interior maximum $m$ at $z_{0}$. Then
\[
\psi:=\phi_{\varepsilon}-m
\]
touches $\tilde{u}$ from below at $z_{0}$. Since $\phi_{\varepsilon}$ is admissible, so is $\psi$. By the viscosity inequality \eqref{eq:tilde-viscosity},
\[
\sigma_{2}(\psi_{i\bar{j}})(z_{0})\leq\Lambda(-\psi(z_{0}))^{2}.
\]
But $\psi(z_{0})=\tilde{u}(z_{0})$ and $\psi_{i\bar{j}}=(\phi_{\varepsilon})_{i\bar{j}}$. Hence, using \eqref{eq:auxiliary},
\[
\Lambda(-\tilde{u}(z_{0}))^{2}+\varepsilon
=
\sigma_{2}((\phi_{\varepsilon})_{i\bar{j}})(z_{0})
\leq
\Lambda(-\tilde{u}(z_{0}))^{2},
\]
which is impossible. Thus $\phi_{\varepsilon}\leq\tilde{u}$.
\end{proof}

\subsection{Proof of the Brunn--Minkowski inequality}

\begin{proof}[Proof of Theorem \ref{thm:BM}]
From Lemma \ref{lem:replacement},
\[
-\phi_{\varepsilon}\geq-\tilde{u}>0.
\]
Since $\phi_{\varepsilon}$ is admissible, the Rayleigh quotient gives
\[
\lambda_{\mathbb{C},2}(\Omega_{t})
\leq
\frac{\displaystyle\int_{\Omega_{t}}(-\phi_{\varepsilon})
\sigma_{2}((\phi_{\varepsilon})_{i\bar{j}})\,dV}
{\displaystyle\int_{\Omega_{t}}(-\phi_{\varepsilon})^{3}\,dV}.
\]
Using \eqref{eq:auxiliary},
\[
\lambda_{\mathbb{C},2}(\Omega_{t})
\leq
\Lambda
\frac{\displaystyle\int_{\Omega_{t}}(-\phi_{\varepsilon})(-\tilde{u})^{2}\,dV}
{\displaystyle\int_{\Omega_{t}}(-\phi_{\varepsilon})^{3}\,dV}+\varepsilon \frac{\displaystyle\int_{\Omega_{t}}(-\phi_{\varepsilon})dV}
{\displaystyle\int_{\Omega_{t}}(-\phi_{\varepsilon})^{3}\,dV}.
\]
Since $-\tilde{u}\leq-\phi_{\varepsilon}$, the quotient on the right is at most $1$. Hence
\[
\lambda_{\mathbb{C},2}(\Omega_{t})\leq\Lambda+\varepsilon \frac{\displaystyle\int_{\Omega_{t}}(-\phi_{\varepsilon})dV}
{\displaystyle\int_{\Omega_{t}}(-\phi_{\varepsilon})^{3}\,dV}.
\]
By H\"older's inequality,
\[
\int_{\Omega_{t}} -\phi_{\varepsilon} dV\leqslant \left( \int_{\Omega_{t}} \left( -\phi_{\varepsilon} \right)^{3} dV \right)^{\frac{1}{3}} |\Omega_{t} |^{\frac{2}{3}}.
\]
Hence
\[
\frac{\int_{\Omega_{t}} -\phi_{\varepsilon} dV}{\int_{\Omega_{t}} \left( -\phi_{\varepsilon} \right)^{3} dV} \leqslant \frac{1}{\left( \int_{\Omega_{t}} \left( -\phi_{\varepsilon} \right)^{3} dV \right)^{\frac{2}{3}}} |\Omega_{t} |^{\frac{2}{3}}\leqslant \frac{1}{\left( \int_{\Omega_{t}} \left( -\tilde{u} \right)^{3} dV \right)^{\frac{2}{3}}} |\Omega_{t} |^{\frac{2}{3}}.
\]
Therefore
\[
\lambda_{\mathbb{C},2}(\Omega_{t})\leq\Lambda+\varepsilon \frac{1}{\left( \int_{\Omega_{t}} \left( -\tilde{u} \right)^{3} dV \right)^{\frac{2}{3}}} |\Omega_{t} |^{\frac{2}{3}}.
\]
Letting $\varepsilon\downarrow0$ gives the max inequality
\begin{equation}\label{eq:max-ineq}
\lambda_{\mathbb{C},2}((1-t)\Omega_{0}+t\Omega_{1})
\leq
\max\{\lambda_{\mathbb{C},2}(\Omega_{0}),\lambda_{\mathbb{C},2}(\Omega_{1})\}.
\end{equation}

Finally, use homogeneity. Since
\[
\lambda_{\mathbb{C},2}(r\Omega)=r^{-4}\lambda_{\mathbb{C},2}(\Omega),
\]
set
\[
L=(1-t)\lambda_{\mathbb{C},2}(\Omega_{0})^{-1/4}
+t\lambda_{\mathbb{C},2}(\Omega_{1})^{-1/4}
\]
and
\[
\Omega'_{i}=\lambda_{\mathbb{C},2}(\Omega_{i})^{1/4}\Omega_{i},\qquad i=0,1.
\]
Then $\lambda_{\mathbb{C},2}(\Omega'_{i})=1$. Choose
\[
t'=\frac{t\lambda_{\mathbb{C},2}(\Omega_{1})^{-1/4}}{L}.
\]
A direct computation gives
\[
(1-t')\Omega'_{0}+t'\Omega'_{1}
=
\frac{1}{L}\bigl((1-t)\Omega_{0}+t\Omega_{1}\bigr).
\]
Applying \eqref{eq:max-ineq} to $\Omega'_{0}$ and $\Omega'_{1}$ yields
\[
\lambda_{\mathbb{C},2}\left(\frac{1}{L}\Omega_{t}\right)\leq1.
\]
Using homogeneity once more,
\[
L^{4}\lambda_{\mathbb{C},2}(\Omega_{t})\leq1.
\]
Equivalently,
\[
\lambda_{\mathbb{C},2}(\Omega_{t})^{-1/4}\geq L,
\]
which is exactly \eqref{eq:BM}.
\end{proof}

\section{Appendix}
Let $\Omega\subset\mathbb{C}^{n}$ be a bounded $C^{\infty}$-smooth, real uniformly strictly convex domain. Consider the principal complex $\sigma_{2}$-Hessian eigenvalue problem
\begin{equation}\label{eq:eigen-problem}
\begin{cases}
\sigma_{2}(u_{i\bar{j}})=\lambda_{\mathbb{C},2}(\Omega)(-u)^{2}, & z\in\Omega,\\
u<0, & z\in\Omega,\\
u=0, & z\in\partial\Omega,
\end{cases}
\end{equation}
where the solution is taken in the complex $\Gamma_{2}$ branch:
\[
(u_{i\bar{j}})(z)\in\Gamma_{2},\qquad z\in\Omega.
\]
Here
\[
\Gamma_{2}=\{A\in\mathbb{C}^{n\times n},A=A^{*}:\sigma_{1}(A)>0,\ \sigma_{2}(A)>0\}.
\]
For the complex $\sigma_{2}$ setting, Theorem \ref{ass:spectral} can be stated as follows.

\begin{theorem}\label{ass:spectral2}
For every bounded $C^{\infty}$-smooth, real uniformly strictly convex domain $\Omega\subset\mathbb{C}^{n}$, problem \eqref{eq:eigen-problem} has a negative $2$-admissible principal eigenfunction $u$, unique up to positive multiplicative constants, and
\[
\lambda_{\mathbb{C},2}(\Omega)>0,\qquad
u\in C^{1,1}\left( \bar{\Omega} \right)\cap C^{\infty}(\Omega).
\]
Moreover, if $\{\Omega_{t}\}_{t\in[0,1]}$ is a family of $C^{\infty}$-smooth, real uniformly strictly convex domains whose boundary geometry is uniformly controlled and depends continuously on $t$, then under the normalization
\begin{equation}\label{eq:normalization}
\int_{\Omega_{t}}(-u_{t})^{3}\,dV=1,
\end{equation}
one has
\[
t_{j}\to t_{0}
\quad\Longrightarrow\quad
\lambda_{\mathbb{C},2}(\Omega_{t_{j}})
\to
\lambda_{\mathbb{C},2}(\Omega_{t_{0}})
\]
and
\[
u_{t_{j}}\to u_{t_{0}}
\quad\text{in }C^{2}_{\mathrm{loc}}(\Omega_{t_{0}}).
\]
\end{theorem}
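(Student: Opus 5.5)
The plan is to verify the three assertions of Theorem \ref{ass:spectral2} in turn: existence with positivity of the eigenvalue, uniqueness and regularity, and continuous dependence. All three rest on the observation that a real uniformly strictly convex domain is strongly pseudoconvex, and in particular strongly $2$-pseudoconvex. Indeed, if $\rho$ is a defining function with positive definite real Hessian near $\partial\Omega$, then $(\rho_{i\bar j})=C(D^2_{\mathbb R}\rho)>0$. By \eqref{eq:C-R}, $C$ maps $S^{2n}_{++}$ into $H^n_{++}$. One may even take $\rho$ globally strictly convex, for example a suitable convex function of the gauge of $\Omega$, so that $(\rho_{i\bar j})\in\Gamma_2$ on all of $\overline\Omega$. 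This places us within the hypotheses of Badiane--Zeriahi \cite{badiane2023variational} and Chu--Liu--McCleerey \cite{chu2024eigenvalue}.

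\emph{Existence, positivity, regularity, uniqueness.} First, the variational theory of \cite{badiane2023variational} yields a minimizer of the Rayleigh quotient \eqref{eq:rayleigh} in the finite energy class. This minimizer solves \eqref{eq:eigen-problem} weakly. Positivity of $\lambda_{\mathbb C,2}(\Omega)$ comes from a Poincaré/Sobolev-type inequality for the $2$-energy. Comparing with the ball eigenvalue through inclusion of $\Omega$ in a large ball and monotonicity of $\lambda$ also gives a quantitative positive lower bound. Next, \cite{chu2024eigenvalue} upgrades the eigenfunction to $C^{1,1}(\overline\Omega)$ and gives uniqueness up to positive multiples on strongly $2$-pseudoconvex domains. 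For interior smoothness, the plan is to show strict admissibility, i.e.\ $\sigma_2(u_{i\bar j})=\lambda(-u)^2>0$ with a positive lower bound on compact subsets, since $u<0$ inside. Together with the $C^{1,1}$ bound, this makes the operator uniformly elliptic on compacts, using concavity of $\sigma_2^{1/2}$ on $\Gamma_2$. Evans--Krylov then gives $C^{2,\alpha}_{\mathrm{loc}}$, and Schauder bootstrapping gives $C^\infty(\Omega)$.

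\emph{Continuous dependence.} Fix $t_j\to t_0$. Uniform control of boundary geometry gives uniform inner and outer ball radii and uniform defining functions, so $\lambda(\Omega_{t_j})$ is bounded above and below. The $C^{1,1}$ estimates of \cite{chu2024eigenvalue} depend only on that geometry and on $\lambda$, so under \eqref{eq:normalization} the functions $u_{t_j}$ are bounded in $C^{1,1}$ uniformly; extend them by zero. Pass to a subsequence with $\lambda(\Omega_{t_j})\to\lambda_*$ and $u_{t_j}\to u_*$ in $C^{1}$. The limit $u_*$ is $2$-admissible, vanishes on $\partial\Omega_{t_0}$, and satisfies $\int(-u_*)^3=1$, so it is nonzero. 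Stability of viscosity/pluripotential solutions under uniform convergence shows $\sigma_2(u_{*,i\bar j})=\lambda_*(-u_*)^2$.

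To identify the limit, we need $\lambda_*=\lambda(\Omega_{t_0})$ and $u_*<0$. For the eigenvalue, the Rayleigh quotient gives $\lambda_*\ge\lambda(\Omega_{t_0})$. For the reverse inequality, we transplant the $\Omega_{t_0}$ eigenfunction to $\Omega_{t_j}$ by slightly dilating it into the domain, which gives upper semicontinuity. Strict negativity of $u_*$ follows from a strong maximum principle for subharmonic functions, since $\Delta u_*\ge0$ and $u_*\not\equiv0$. Uniqueness then forces $u_*=u_{t_0}$, so the whole sequence converges. Interior $C^{2,\alpha}$ estimates, uniform on compacts by the uniform strict admissibility just established, upgrade the convergence to $C^2_{\mathrm{loc}}$.

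The main obstacle I expect is the uniformity of the $C^{1,1}$ bound in the deformation, together with the lower bound on $\sigma_2$ on compacts needed for Evans--Krylov. I would handle this by tracking the constants in \cite{chu2024eigenvalue}. Failing that, I would prove uniform $C^0$ and gradient bounds directly via barriers $A\rho_t$ built from uniformly controlled defining functions.
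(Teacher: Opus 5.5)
Your proposal is correct and follows essentially the same route as the paper. It deduces strong $2$-pseudoconvexity from real uniform convexity, then uses Badiane--Zeriahi and Chu--Liu--McCleerey for existence, $C^{1,1}(\overline{\Omega})$ regularity and uniqueness, and Evans--Krylov--Schauder for interior smoothness. For the deformation statement it relies, as the paper does, on uniform eigenvalue and $C^{1,1}$ bounds, compactness, the strong maximum principle for $u_*<0$, and uniqueness to identify the limit. The one difference is that you identify $\lambda_*=\lambda_{\mathbb{C},2}(\Omega_{t_0})$ explicitly before invoking uniqueness (the Rayleigh quotient gives $\geq$, dilation with domain monotonicity gives $\leq$), whereas the paper lets the Chu--Liu--McCleerey uniqueness theorem fix the eigenvalue and eigenfunction together.
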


The remainder of this appendix explains why this assumption is supported by existing results and standard arguments.

\subsection{Real Convexity and Pseudoconvexity}

We first explain why the geometric condition in Theorem \ref{ass:spectral2} lies within the scope of the complex Hessian spectral theory.

\begin{lemma}\label{lem:real-to-pseudo}
Let $\Omega\subset\mathbb{C}^{n}\simeq\mathbb{R}^{2n}$ be a bounded $C^{2}$-smooth, real uniformly strictly convex domain. Then $\Omega$ is strictly pseudoconvex. More strongly, it is strongly $m$-pseudoconvex for every $1\leq m\leq n$, and in particular strongly $2$-pseudoconvex.
\end{lemma}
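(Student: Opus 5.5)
The plan is to build a local defining function $\rho$ of $\Omega$ whose real Hessian is positive definite near $\partial\Omega$. Then we compress that Hessian with the map $C$ from Section~2 to see that the complex Hessian is positive definite. Positive definiteness of $(\rho_{i\bar j})$ on the whole complex tangent space gives strict pseudoconvexity. Full positive definiteness of $(\rho_{i\bar j})$ puts the eigenvalue vector in the positive cone $\Gamma_n\subset\Gamma_m$ for every $m$, which gives strong $m$-pseudoconvexity.

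\textbf{Step 1: a defining function with positive real Hessian.} Let $d$ be the signed distance to $\partial\Omega$, negative inside. Near the boundary, the real Hessian $D^2_{\mathbb{R}}d$ at a boundary point equals the second fundamental form on $T\partial\Omega$ and vanishes in the normal direction. It is positive semidefinite in a neighborhood by uniform convexity, but it is not definite. So I would take
\[
\rho=d+K d^{2}.
\]
Then $D^2_{\mathbb{R}}\rho=(1+2Kd)D^2d+2K\,\nabla d\otimes\nabla d$. On $\partial\Omega$ this is $D^2 d+2K\nu\otimes\nu$. Splitting a vector into tangential and normal parts and using the uniform lower bound $\kappa_0>0$ on the second fundamental form, a Cauchy--Schwarz absorption of the cross terms (which are bounded by the $C^2$ norm of $\partial\Omega$) shows that $D^2_{\mathbb{R}}\rho\ge c_0 I$ on $\partial\Omega$ for $K$ large. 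By continuity, the same bound holds in a thin strip $\{-\delta<d\le 0\}$. If a global function is wanted, I would use the standard alternative: for a bounded convex $C^2$ domain with positive curvature, the gauge-type function $\rho=g^2-1$ (or $e^{Ad}-1$) is strictly convex on all of $\overline\Omega$. I would not rely on that unless needed, since only a neighborhood of $\partial\Omega$ matters for pseudoconvexity.

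\textbf{Step 2: compression.} By \eqref{eq:C-R}, $(\rho_{i\bar j})=C(D^2_{\mathbb{R}}\rho)=\frac14R^*D^2_{\mathbb{R}}\rho\,R$. For $\zeta\in\mathbb{C}^n$, write $R\zeta=\xi_1+i\xi_2$ with $\xi_1,\xi_2$ real. Since $D^2_{\mathbb{R}}\rho$ is real symmetric,
\[
\zeta^*(\rho_{i\bar j})\zeta=\tfrac14\bigl(\xi_1^TD^2_{\mathbb{R}}\rho\,\xi_1+\xi_2^TD^2_{\mathbb{R}}\rho\,\xi_2\bigr)\ge\tfrac{c_0}{4}|R\zeta|^2=\tfrac{c_0}{2}|\zeta|^2 .
\]
Hence $(\rho_{i\bar j})\ge \frac{c_0}{2}I$ near $\partial\Omega$. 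In particular, the Levi form is positive on $T^{1,0}\partial\Omega$, so $\Omega$ is strictly pseudoconvex. All eigenvalues of $(\rho_{i\bar j})$ exceed $c_0/2$, so $\sigma_k(\rho_{i\bar j})>0$ for every $k$. Thus $\lambda(\rho_{i\bar j})\in\Gamma_m$ uniformly, which is the strong $m$-pseudoconvexity condition, in particular for $m=2$.

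\textbf{Main obstacle.} The only real work is Step~1: choosing $K$ so that the normal direction becomes definite, with constants depending only on $\kappa_0$ and the $C^2$ bounds of $\partial\Omega$. The mixed tangential–normal terms of $D^2 d$ vanish on $\partial\Omega$ because $|\nabla d|=1$ forces $D^2d\,\nabla d=0$, so the estimate at the boundary is clean. After that, the extension to a strip is just continuity, and the compression in Step~2 is purely algebraic.
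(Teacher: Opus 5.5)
Your proof is correct, but it is organized differently from the paper's.

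The paper keeps an arbitrary defining function and checks only the Levi form. A vector in $T^{1,0}_x\partial\Omega$ corresponds to a real vector $X$ with both $X$ and $JX$ tangent to $\partial\Omega$, where $J$ is the complex structure. Through the identity in your Step 2, the Levi form equals $\frac14\bigl(D^2\rho[X,X]+D^2\rho[JX,JX]\bigr)$, so positivity follows at once from real convexity on tangential vectors. This needs no modification of $\rho$ and no regularity of the distance function, but it only gives positivity on the complex tangent space.

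You instead first build $\rho=d+Kd^{2}$, whose full real Hessian is positive definite near $\partial\Omega$. On $\partial\Omega$ this Hessian is the second fundamental form on $T\partial\Omega$ plus $2K$ in the normal direction. There are no cross terms, since $D^2d\,\nabla d=0$, so any $K\ge\kappa_0/2$ works and the Cauchy--Schwarz absorption you mention is unnecessary. You then compress to get $(\rho_{i\bar j})\ge\frac{c_0}{2}I$ on a neighbourhood of $\partial\Omega$.

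Your route buys a stronger conclusion: a local strictly plurisubharmonic defining function. This directly covers formulations of strong $m$-pseudoconvexity stated via $\lambda(\rho_{i\bar j})\in\Gamma_m$ for a defining function. It is also closer to what the paper actually uses when it takes $A\rho$ as a subsolution for \eqref{eq:auxiliary}. Starting from the paper's Levi-form positivity, one would need the complex analogue of your $d+Kd^2$ trick to reach that.

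One caveat on your aside: the proposed global alternatives are not $C^2$ in general. The function $g^2$ is $2$-homogeneous, hence $C^2$ at the centre only if $\Omega$ is an ellipsoid centred there. The signed distance $d$ is not $C^2$ across the interior ridge set. A global defining function of this kind therefore requires a gluing, e.g. a regularized maximum with a small strictly convex function. Since you explicitly do not rely on it, this does not affect your proof of the lemma.
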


\begin{proof}
Choose a defining function $\rho$ such that
\[
\Omega=\{\rho<0\},\qquad
\partial\Omega=\{\rho=0\},\qquad
D\rho\neq0\quad\text{on }\partial\Omega.
\]
Real uniform strict convexity means that there exists $c_{0}>0$ such that for every $x\in\partial\Omega$ and every real tangent vector $\tau\in T_{x}\partial\Omega$,
\[
D^{2}\rho(x)[\tau,\tau]\geq c_{0}|\tau|^{2},
\]
with the exterior normal sign convention.

The Levi form is the Hermitian form obtained by restricting the real Hessian to complex tangential directions. More explicitly, if $\xi\in T^{1,0}_{x}\partial\Omega$ is a complex tangent vector, then its associated real vector belongs to a complex-invariant subspace of $T_{x}\partial\Omega$. Since the real second fundamental form is positive definite on all real tangential directions, it is positive definite in particular on complex tangential directions. Therefore the Levi form is positive definite, and $\Omega$ is strictly pseudoconvex.

Strong $m$-pseudoconvexity requires the first $m$ elementary symmetric functions of the Levi eigenvalues to lie in the positive cone. Since the Levi form is positive definite, its eigenvalues belong to $\Gamma_{m}$ for every $1\leq m\leq n$. Hence $\Omega$ is strongly $m$-pseudoconvex for every $m$, and in particular for $m=2$.
\end{proof}

\subsection{Existence, Uniqueness, and Regularity}
Define the admissible test-function class
\[
\mathcal{A}_{2}(\Omega)
=
\left\{
\varphi\in C^{2}(\Omega)\cap C(\overline{\Omega}):
\varphi<0\ \text{in }\Omega,\ 
\varphi=0\ \text{on }\partial\Omega,\ 
(\varphi_{i\bar{j}})\in\Gamma_{2}
\right\}.
\]
The formal Rayleigh quotient is
\begin{equation}\label{eq:rayleigh-app}
R_{\Omega}(\varphi)
=
\frac{\displaystyle\int_{\Omega}(-\varphi)\sigma_{2}(\varphi_{i\bar{j}})\,dV}
{\displaystyle\int_{\Omega}(-\varphi)^{3}\,dV}.
\end{equation}
Thus
\begin{equation}\label{eq:eigenvalue}
\lambda_{\mathbb{C},2}(\Omega)
=
\inf_{\varphi\in\mathcal{A}_{2}(\Omega)}R_{\Omega}(\varphi).
\end{equation}
This definition agrees with the variational eigenvalue in the complex Hessian energy space. Badiane--Zeriahi \cite{badiane2023variational} used variational methods to establish the first eigenvalue and eigenfunction for complex Hessian operators on bounded $m$-hyperconvex domains. Chu--Liu--McCleerey \cite{chu2024eigenvalue} further established $C^{1,1}$ regularity, uniqueness, and the variational formula for the first eigenvalue on strongly $m$-pseudoconvex manifolds. In the present setting, we take $m=2$.\\
Theorem 1.1 in \cite{chu2024eigenvalue}, together with Lemma \ref{lem:real-to-pseudo}, yields the following conclusion.
\begin{theorem}[Existence, Uniqueness, and Regularity]\label{ass:spectral3}
For every bounded $C^{\infty}$-smooth, real uniformly strictly convex domain $\Omega\subset\mathbb{C}^{n}$, problem \eqref{eq:eigen-problem} has a negative $2$-admissible principal eigenfunction $u$, unique up to positive multiplicative constants, and
\[
\lambda_{\mathbb{C},2}(\Omega)>0,\qquad
u\in C^{1,1}\left( \bar{\Omega} \right)\cap C^{\infty}(\Omega).
\]
\end{theorem}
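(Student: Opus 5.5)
The statement is Theorem \ref{ass:spectral3}. The plan is to reduce it to the main theorem of Chu--Liu--McCleerey \cite{chu2024eigenvalue}, with Lemma \ref{lem:real-to-pseudo} checking that its hypotheses hold. The $C^{\infty}$ interior regularity and the positivity of $\lambda_{\mathbb{C},2}(\Omega)$ are then added by standard arguments.

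\emph{Step 1: geometric hypotheses.} By Lemma \ref{lem:real-to-pseudo}, a bounded smooth real uniformly strictly convex $\Omega\subset\mathbb{C}^{n}$ is strongly $2$-pseudoconvex. Concretely, a defining function $\rho$ can be chosen with $(\rho_{i\bar{j}})\in\Gamma_{2}$ near $\overline{\Omega}$. For example, take $\rho$ uniformly convex in the real sense, which is possible by real uniform convexity. Then $(\rho_{i\bar{j}})=C(D^{2}_{\mathbb{R}}\rho)>0$ by \eqref{eq:C-R}, so $\rho$ is strictly plurisubharmonic. This places $\Omega$, viewed as a flat Kähler manifold with boundary, in the setting of \cite{chu2024eigenvalue}. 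The domain is also $2$-hyperconvex, which is the setting of Badiane--Zeriahi \cite{badiane2023variational}.

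\emph{Step 2: existence, uniqueness, $C^{1,1}$.} Applying Theorem 1.1 of \cite{chu2024eigenvalue} with $m=2$ gives the following:
\begin{itemize}
\item a pair $(\lambda,u)$ with $u\in C^{1,1}(\overline{\Omega})$, $u<0$, $u|_{\partial\Omega}=0$, $u$ $2$-subharmonic, solving $\sigma_{2}(u_{i\bar{j}})=\lambda(-u)^{2}$;
\item uniqueness of $u$ up to positive multiples among such solutions;
\item the variational characterization of $\lambda$ as the infimum of the Rayleigh quotient \eqref{eq:rayleigh-app}.
\end{itemize}
I will check that their normalization of the operator (powers of $dd^{c}$ and the Kähler form versus $\sigma_{2}(u_{i\bar{j}})$) differs from ours only by a dimensional constant. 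That constant rescales $\lambda$ and does not affect the eigenfunction. The conclusion $\lambda_{\mathbb{C},2}(\Omega)>0$ comes from their theorem directly. Alternatively, it follows from a comparison argument: $\sigma_{2}(u_{i\bar{j}})\geq0$ with $u\not\equiv0$, together with strict subharmonicity of $u$ (Lemma \ref{lem:hopf}) and integration of the quotient, forces the quotient to be positive.

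\emph{Step 3: strict admissibility and smoothness.} Since $u<0$ in $\Omega$ and $\lambda>0$, the right-hand side $\lambda(-u)^{2}$ is strictly positive on each compact $K\Subset\Omega$. Hence $(u_{i\bar{j}})\in\Gamma_{2}$ there, and the $C^{1,1}$ bound puts the eigenvalues of $(u_{i\bar{j}})$ in a compact subset of $\Gamma_{2}$. On that set $\sigma_{2}^{1/2}$ is uniformly elliptic and concave. The right-hand side $(\lambda(-u)^{2})^{1/2}=\lambda^{1/2}(-u)$ is $C^{1,1}$.

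Evans--Krylov (in the complex version, or via the concave real operator $\sigma_{2}^{1/2}\circ C$) gives $u\in C^{2,\alpha}_{\mathrm{loc}}$. Bootstrapping with Schauder estimates on the differentiated equation, whose right-hand side improves with $u$ at each stage, yields $u\in C^{\infty}(\Omega)$.

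\emph{Main obstacle.} The delicate point is the bridge between weak and classical notions. The eigenfunction of \cite{chu2024eigenvalue} is a priori a $C^{1,1}$ pluripotential ($2$-subharmonic) solution. To use it here I need three things:
\begin{itemize}
\item its Hessian measure coincides a.e.\ with $\sigma_{2}(u_{i\bar{j}})$, which holds for $C^{1,1}$ functions;
\item $(u_{i\bar{j}})\in\Gamma_{2}$ pointwise, which needs the positivity $u<0$, obtained from the strong maximum principle for subharmonic functions;
\item uniqueness in their class implies uniqueness among the classical admissible negative solutions considered here, since any such classical solution lies in their energy class.
\end{itemize}
I expect the first-order justification of Evans--Krylov for a $C^{1,1}$ viscosity/a.e.\ solution to be the hardest step. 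I would handle it by citing the $C^{1,1}$ interior Evans--Krylov theory for concave uniformly elliptic equations, which applies to $W^{2,\infty}$ strong solutions.
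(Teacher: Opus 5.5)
Your proposal is correct and follows the paper's route: the paper proves this statement simply by invoking Theorem 1.1 of Chu--Liu--McCleerey \cite{chu2024eigenvalue} together with Lemma \ref{lem:real-to-pseudo}. Your Steps 1--3 are that same reduction, with the interior Evans--Krylov--Schauder bootstrap written out; the paper only alludes to that bootstrap in its discussion of the spectral input. One small caution: your \emph{alternative} argument for $\lambda_{\mathbb{C},2}(\Omega)>0$ via Lemma \ref{lem:hopf} is circular, since that lemma assumes $(u_{i\bar{j}})\in\Gamma_{2}$, which your Step 3 derives from $\lambda>0$; rely instead on the cited theorem, or on the comparison principle (if $\lambda=0$, then $u$ and $0$ solve the same homogeneous Dirichlet problem, forcing $u\equiv0$).
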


\subsection[Local C2 Continuous Dependence Along Domain Deformations]{$C^{2}_{\mathrm{loc}}$ Continuous Dependence Along Domain Deformations}

We now prove the part of Theorem \ref{ass:spectral2} that is easiest to overlook: the normalized principal eigenfunction depends continuously on the domain in $C^{2}_{\mathrm{loc}}$ along smooth deformations.

\begin{proposition}[Deformation continuity]\label{prop:deformation}
Let $\{\Omega_{t}\}_{t\in[0,1]}$ be a family of bounded $C^{\infty}$-smooth, real uniformly strictly convex domains whose boundary geometry is uniformly controlled and depends continuously on $t$. Let $u_{t}$ be the principal eigenfunction normalized by
\[
\int_{\Omega_{t}}(-u_{t})^{3}\,dV=1,
\]
and set
\[
\lambda_{t}=\lambda_{\mathbb{C},2}(\Omega_{t}).
\]
Then for every $t_{0}\in[0,1]$ and every $K\Subset\Omega_{t_{0}}$,
\[
\lambda_{t}\to\lambda_{t_{0}},\qquad
u_{t}\to u_{t_{0}}\quad\text{in }C^{2}(K)
\]
as $t\to t_{0}$.
\end{proposition}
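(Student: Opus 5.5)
Our plan is a compactness-plus-uniqueness argument. Fix $t_0$ and a sequence $t_j\to t_0$, and write $u_j=u_{t_j}$, $\lambda_j=\lambda_{t_j}$. We show that every subsequence has a further subsequence along which $\lambda_j\to\lambda_{t_0}$ and $u_j\to u_{t_0}$ in $C^2(K)$. Uniqueness in Theorem \ref{ass:spectral3}, together with the normalization \eqref{eq:normalization}, then forces the whole sequence to converge.

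\textbf{Step 1: two-sided eigenvalue bounds.} For the upper bound we use the Rayleigh quotient \eqref{eq:eigenvalue}. Because the boundary geometry is uniformly controlled and depends continuously on $t$, we can choose diffeomorphisms $\Phi_t:\overline{\Omega_{t_0}}\to\overline{\Omega_t}$ that are $C^\infty$-close to the identity. Transplanting the eigenfunction is delicate, since $u_{t_0}\circ\Phi_t^{-1}$ may fail to be admissible near the boundary. We therefore use instead the admissible competitor $\varphi_t=\max$-free combination $u_{t_0}\circ\Phi_t^{-1}+\delta\rho_t$, where $\rho_t$ is a uniformly strictly convex defining function; adding a small multiple of a strictly plurisubharmonic function restores membership in $\Gamma_2$. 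This gives $\limsup\lambda_j\le\lambda_{t_0}+o_\delta(1)$. The lower bound and uniform positivity follow from monotonicity under inclusion, taking a large ball $B\supset\Omega_t$ for all $t$. Monotonicity holds because $\mathcal A_2(\Omega')\subset\mathcal A_2(\Omega)$ after extension by $0$, up to the standard approximation in the energy class. Hence $\lambda_j$ stays in a compact subset of $(0,\infty)$.

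\textbf{Step 2: uniform estimates.} We need $\sup|u_j|$ bounded and a uniform $C^{1,1}$ bound. The sup bound comes from comparison with $A\rho_t$, combined with the normalization and the eigen-equation: the right-hand side is $\lambda_j(-u_j)^2$, and we can bootstrap. The uniform $C^{1,1}(\overline{\Omega_{t_j}})$ bound is taken from the a priori estimates of Chu--Liu--McCleerey \cite{chu2024eigenvalue}, whose constants depend only on the geometric data of the domain, which is uniformly controlled here. On a fixed $K\Subset\Omega_{t_0}$ we must also show that the equation is uniformly elliptic. Since $\sigma_2(u_{i\bar j})=\lambda_j(-u_j)^2\ge c_K>0$ on $K$, and $u_j$ is bounded away from zero there (by a uniform Hopf/barrier lower bound, or Harnack), the $C^{1,1}$ bound puts $(u_{i\bar j})$ in a compact subset of $\Gamma_2$. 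Evans--Krylov and Schauder theory then give uniform $C^{k,\alpha}(K)$ bounds.

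\textbf{Step 3: passing to the limit.} By Arzel\`a--Ascoli, along a subsequence $\lambda_j\to\lambda_*$ and $u_j\to u_*$ in $C^2_{\mathrm{loc}}(\Omega_{t_0})$, and uniformly after composing with $\Phi_{t_j}$ (using the uniform Lipschitz bound). The limit $u_*$ vanishes on $\partial\Omega_{t_0}$, satisfies $\int(-u_*)^3=1$, and solves $\sigma_2(u_{*,i\bar j})=\lambda_*(-u_*)^2$ with $u_*\le0$ and $(u_{*,i\bar j})\in\overline{\Gamma_2}$. The normalization gives $u_*\not\equiv0$. The strong maximum principle applied to the subharmonic function $u_*$ gives $u_*<0$, and then the equation gives strict admissibility. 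Computing $R_{\Omega_{t_0}}(u_*)=\lambda_*$ yields $\lambda_*\ge\lambda_{t_0}$, so Step 1 gives $\lambda_*=\lambda_{t_0}$. Uniqueness then gives $u_*=u_{t_0}$.

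The main obstacle is Step 2: we need boundary-uniform $C^{1,1}$ bounds and a uniform interior nondegeneracy, meaning a lower bound on $-u_j$ on $K$, independent of $j$. If the estimates in \cite{chu2024eigenvalue} are not explicitly uniform, we would first try to rerun them on the fixed reference domain via $\Phi_t$, using that their constants depend on finitely many derivatives of the defining functions.
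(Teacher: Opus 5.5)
Your proposal is correct and has the same skeleton as the paper's proof: uniform bounds, interior nondegeneracy giving uniform ellipticity and higher interior estimates, Arzel\`a--Ascoli on a common ball, then uniqueness. The real difference is how you identify the limiting eigenvalue.

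The paper reads off both $u_\infty=u_{t_0}$ and $\lambda_\infty=\lambda_{t_0}$ from the Chu--Liu--McCleerey uniqueness theorem. This tacitly uses the stronger statement that every negative admissible eigenpair is the principal one. Its two-sided eigenvalue bounds come from inner and outer balls plus scaling.

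You instead prove upper semicontinuity $\limsup_j\lambda_j\le\lambda_{t_0}$ by transplanting $(u_{t_0}+\delta\rho_{t_0})\circ\Phi_t^{-1}$. This is sound: since $u_{t_0}\in C^{1,1}(\overline{\Omega_{t_0}})$, the change of variables perturbs the complex Hessian by $O(\|\Phi_t-\mathrm{id}\|_{C^2})$. The term $\delta(\rho_{t_0})_{i\bar j}\ge c\delta I$ absorbs this once $t$ is close to $t_0$, and $\delta\to0$ is taken afterwards. You then get $\lambda_*\ge\lambda_{t_0}$ because $u_*\in\mathcal{A}_2(\Omega_{t_0})$ with $R_{\Omega_{t_0}}(u_*)=\lambda_*$.

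Your route needs only the Rayleigh characterization \eqref{eq:eigenvalue} and uniqueness at the principal eigenvalue. The cost is a family of diffeomorphisms $C^2$-close to the identity, a more quantitative reading of ``depends continuously on $t$''. The Minkowski deformation used in the paper does supply this.

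For the interior bound $-u_j\ge c_K$ that you call the main obstacle, your own Step~3 argument already gives it. Run it by contradiction at the $C^{1,\alpha}$ level, using only the uniform $C^{1,1}$ bounds, the strong maximum principle and the normalization. That is exactly how the paper does it. It is cleaner than a barrier, which bounds $-u_j$ from above, not below.

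Two side justifications need repair, although the paper passes over the same points without comment.

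\emph{The sup bound.} The eigen-equation is homogeneous of degree two, so comparison with $A\rho_t$ only yields $-u_j\le C\sqrt{\lambda_j}\,\|u_j\|_\infty|\rho_t|$, which is circular. Instead, apply the estimates to $u_j/\|u_j\|_\infty$ to get $\|Du_j\|_\infty\le C\|u_j\|_\infty$ with $C$ uniform. Then at a minimum point $z_j$ of $u_j$ you have $\operatorname{dist}(z_j,\partial\Omega_{t_j})\ge 1/C$ and $-u_j\ge\frac12\|u_j\|_\infty$ on $B_{1/(2C)}(z_j)$. The normalization then bounds $\|u_j\|_\infty$.

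\emph{Domain monotonicity.} Extension by zero does not map $\mathcal{A}_2(\Omega')$ into $\mathcal{A}_2(\Omega)$. The glued function has a concave kink across $\partial\Omega'$ (e.g.\ $\min(|z|^2-1,0)$) and is not even subharmonic. Monotonicity follows instead by touching. If $\overline{\Omega'}\subset\Omega$, take the least $c>0$ with $c\,u_\Omega\le u_{\Omega'}$ in $\Omega'$. The contact point is interior, and comparing $\sigma_2$ of the ordered complex Hessians there gives $\lambda_{\mathbb{C},2}(\Omega)\le\lambda_{\mathbb{C},2}(\Omega')$.
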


\begin{proof}
We divide the proof into five steps.

First, compact subsets remain inside nearby domains. Since $\partial\Omega_{t}$ varies continuously with $t$, if $K\Subset\Omega_{t_{0}}$, then there exists $\delta>0$ such that
\[
K\Subset\Omega_{t}
\]
whenever $|t-t_{0}|<\delta$.

Second, the eigenvalues have uniform upper and lower bounds. The uniform real strict convexity and boundedness of the deformation family give fixed balls $B_{r}$ and $B_{R}$ such that, after a uniform translation if necessary,
\[
B_{r}\subset\Omega_{t}\subset B_{R},\qquad 0\leq t\leq1.
\]
By domain monotonicity and the scaling law
\[
\lambda_{\mathbb{C},2}(s\Omega)=s^{-4}\lambda_{\mathbb{C},2}(\Omega),
\]
we obtain
\[
0<c\leq\lambda_{t}\leq C<\infty.
\]
Domain monotonicity and the scaling law are standard consequences of the variational definition; they can also be checked directly from \eqref{eq:rayleigh-app}.

Third, we have uniform interior higher-order estimates. Fix $K\Subset\Omega_{t_{0}}$. When $t$ is close to $t_{0}$, $K$ has uniformly positive distance from $\partial\Omega_{t}$. From the $C^{1,1}$ regularity theorem of Chu--Liu--McCleerey \cite{chu2024eigenvalue}, together with the uniform geometric control of the deformation family, the functions $u_{t}$ have locally uniform second derivative bounds. By the normalization and a compactness contradiction using the strong maximum principle, we claim that on a slightly smaller compact set one has
\[
-u_{t}\geq c_{K}>0.
\]
Thus the right-hand side $\lambda_{t}(-u_{t})^{2}$ has a positive lower bound on $K$, and the equation is uniformly elliptic there. By the interior estimates of \cite{chu2024eigenvalue}, for every $m\geq2$,
\[
\|u_{t}\|_{C^{m}(K)}\leq C_{K,m}
\]
uniformly for $t$ close to $t_{0}$. For the closedness argument, it is enough to take $m=4$ or $C^{2,\alpha}$ bounds.

Fourth, compactness gives a limiting equation. Let $t_{j}\to t_{0}$. By the preceding step, after passing to a subsequence,
\begin{equation}\label{eq1}
u_{t_{j}}\to u_{\infty}\quad\text{in }C^{2}(K)
\end{equation}
for every $K\Subset\Omega_{t_{0}}$. Since the eigenvalues are uniformly bounded, after passing to a further subsequence,
\[
\lambda_{t_{j}}\to\lambda_{\infty}.
\]
Passing to the limit in the equation gives
\begin{equation}\label{eq2}
\sigma_{2}((u_{\infty})_{i\bar{j}})
=
\lambda_{\infty}(-u_{\infty})^{2}
\quad\text{in }\Omega_{t_{0}}.
\end{equation}
The uniform bounds on the boundary geometry allow us to choose a constant $C$ such that
\[
\left\|u_{t}\right\|_{C^{1,1}\left( \overline{\Omega_{t}} \right)}\leqslant C,\qquad \forall t\in \left[ 0,1 \right].
\]
Choose a ball $B$ containing all $\Omega_t$ and extend each $u_t$ to $B$ with a uniform Lipschitz bound. Hence, by the Arzel\`a--Ascoli theorem, after passing to a subsequence we may assume that
\[
\lim_{j\longrightarrow \infty} \left\|u_{t_{j}}-u_{\infty}\right\|_{C^{0}\left( B \right)}=0.
\]
Together with the convergence of the domains, this implies
\[
u_{\infty}=0\quad\text{on }\partial\Omega_{t_{0}},\qquad
u_{\infty}<0\quad\text{in }\Omega_{t_{0}},
\]
and the normalization is preserved:
\[
\int_{\Omega_{t_{0}}}(-u_{\infty})^{3}\,dV=1.
\]
One may also invoke the stability theory for complex Hessian equations \cite{dinew2011apriori}.

Fifth, \eqref{eq1} and \eqref{eq2} show that $u_\infty$ is $2$-admissible. By the uniqueness of the principal eigenfunction in \cite{chu2024eigenvalue}, the normalized principal eigenfunction is unique. Therefore
\[
u_{\infty}=u_{t_{0}},\qquad \lambda_{\infty}=\lambda_{t_{0}}.
\]
Since every subsequential limit is the same, the whole family converges:
\[
u_{t}\to u_{t_{0}}\quad\text{in }C^{2}_{\mathrm{loc}}(\Omega_{t_{0}}),
\qquad
\lambda_{t}\to\lambda_{t_{0}}.
\]
\end{proof}

\small
\bibliographystyle{alpha}
\bibliography{reference}

@article{alvarez1997convex,
  author = {Alvarez, O. and Lasry, J.-M. and Lions, P.-L.},
  title = {Convex viscosity solutions and state constraints},
  journal = {J. Math. Pures Appl.},
  volume = {76},
  year = {1997},
  pages = {265--288}
}

@article{bian2009microscopic,
  author = {Bian, B. and Guan, P.},
  title = {A microscopic convexity principle for nonlinear partial differential equations},
  journal = {Invent. Math.},
  volume = {177},
  year = {2009},
  pages = {307--335}
}

@article{bian2010structural,
  author = {Bian, B. and Guan, P.},
  title = {A structural condition for microscopic convexity principle},
  journal = {Discrete Contin. Dyn. Syst.},
  volume = {28},
  year = {2010},
  pages = {789--807}
}

@article{bauschke2001hyperbolic,
  author = {Bauschke, H. H. and Guler, O. and Lewis, A. S. and Sendov, H. S.},
  title = {Hyperbolic polynomials and convex analysis},
  journal = {Canad. J. Math.},
  volume = {53},
  year = {2001},
  pages = {470--488}
}

@article{brascamp1976extensions,
  author = {Brascamp, H. J. and Lieb, E. H.},
  title = {On extensions of the {Brunn--Minkowski} and {Prekopa--Leindler} theorems, including inequalities for log-concave functions, and with an application to the diffusion equation},
  journal = {J. Funct. Anal.},
  volume = {22},
  year = {1976},
  pages = {366--389}
}

@article{badiane2023variational,
  author = {Badiane, P. and Zeriahi, A.},
  title = {A variational approach to the eigenvalue problem for complex {Hessian} operators},
  journal = {arXiv preprint arXiv:2306.04437},
  year = {2023}
}

@article{chu2024eigenvalue,
  author = {Chu, J. and Liu, Y. and McCleerey, N.},
  title = {The eigenvalue problem for the complex {Hessian} operator on {$m$}-pseudoconvex manifolds},
  journal = {J. Funct. Anal.},
  volume={290},
  year = {2026}
}

@article{colesanti2005brunn,
  author = {Colesanti, A.},
  title = {{Brunn--Minkowski} inequalities for variational functionals and related problems},
  journal = {Adv. Math.},
  volume = {194},
  year = {2005},
  pages = {105--140}
}

@article{colesanti1999hessian,
  author = {Colesanti, A. and Salani, P.},
  title = {Hessian equations in non-smooth domains},
  journal = {Nonlinear Anal.},
  volume = {38},
  year = {1999},
  pages = {803--812}
}

@article{dinew2011apriori,
  author = {Dinew, S. and Ko{\l}odziej, S.},
  title = {A priori estimates for the complex {Hessian} equations},
  journal = {Anal. PDE},
  volume={7},
  year = {2014},
  pages = {227-244}
}

@article{dinew2012liouville,
  author = {Dinew, S. and Ko{\l}odziej, S.},
  title = {{Liouville} and {Calabi--Yau} type theorems for complex {Hessian} equations},
  journal = {Amer. J. Math.},
  vloume={139},
  year = {2017},
  pages={403-415}
}

@article{garding1959inequality,
  author = {Garding, L.},
  title = {An inequality for hyperbolic polynomials},
  journal = {J. Math. Mech.},
  volume = {8},
  year = {1959},
  pages = {957--965}
}

@article{hou2010second,
  author = {Hou, Z. and Ma, X.-N. and Wu, D.},
  title = {A second order estimate for complex {Hessian} equations on a compact {K{\"a}hler} manifold},
  journal = {Math. Res. Lett.},
  volume = {17},
  year = {2010},
  pages = {547--561}
}

@article{liu2010brunn,
  author = {Liu, P. and Ma, X.-N. and Xu, L.},
  title = {A {Brunn--Minkowski} inequality for the {Hessian} eigenvalue in three-dimensional convex domain},
  journal = {Adv. Math.},
  volume = {225},
  year = {2010},
  pages = {1616--1633}
}

@article{renegar2006hyperbolic,
  author = {Renegar, J.},
  title = {Hyperbolic programs, and their derivative relaxations},
  journal = {Found. Comput. Math.},
  volume = {6},
  year = {2006},
  pages = {59--79}
}

@book{rockafellar1970convex,
  author = {Rockafellar, R. T.},
  title = {Convex Analysis},
  publisher = {Princeton University Press},
  address = {Princeton},
  year = {1970}
}

@article{salani2005monge,
  author = {Salani, P.},
  title = {A {Brunn--Minkowski} inequality for the {Monge--Amp$\grave{e}$re} eigenvalue},
  journal = {Adv. Math.},
  volume = {194},
  year = {2005},
  pages = {67--86}
}

@article{salani2012convexity,
  author = {Salani, P.},
  title = {Convexity of solutions and {Brunn--Minkowski} inequalities for {Hessian} equations in {$\mathbb{R}^{3}$}},
  journal = {Adv. Math.},
  volume = {229},
  year = {2012},
  pages = {1924--1948}
}

@article{urbas1990nonclassical,
  author = {Urbas, J. I. E.},
  title = {On the existence of nonclassical solutions for two classes of fully nonlinear elliptic equations},
  journal = {Indiana Univ. Math. J.},
  volume = {39},
  year = {1990},
  pages = {355--382}
}

@article{wang1994class,
  author = {Wang, X.-J.},
  title = {A class of fully nonlinear elliptic equations and related functionals},
  journal = {Indiana Univ. Math. J.},
  volume = {43},
  year = {1994},
  pages = {25--54}
}

@article{cp22,
  author = {Collins, Tristan C. and Picard, Sebastien},
  title = {The {Dirichlet} problem for the {$k$}-{Hessian} equation on a complex manifold},
  journal = {Amer. J. Math.},
  volume = {6},
  year = {2022},
  pages = {1641--1680}
}

@article{korevaar1983convex,
  author = {Korevaar, N.},
  title = {Convex solutions to nonlinear elliptic and parabolic boundary value problems},
  journal = {Indiana Univ. Math. J.},
  volume = {32},
  year = {1983},
  pages = {603--614}
}

@article{caffarelli1985convexity,
  author = {Caffarelli, L. and Friedman, A.},
  title = {Convexity of solutions of some semilinear elliptic equations},
  journal = {Duke Math. J.},
  volume = {52},
  year = {1985},
  pages = {431--455}
}

@article{limasa,
  author = {Jiahuan, Li and Xi-Nan, Ma and Paolo, Salani},
  title = {A Brunn--Minkowski inequality for the Hessian eigenvalue in convex domain},
  journal = {arXiv preprint arXiv:2606.22847},
  year = {2026}
}

@article{zdk,
  author = {Zhang, Dekai},
  title = {Regularity of degenerate k-Hessian equations on closed Hermitian manifolds.},
  journal = {Adv.Nonlinear Stud.},
  volume = {22},
  year = {2022},
  pages = {534-547}
}
{\small
\indent 
(Chuanqiang Chen) School of Mathematics and Statistics, Ningbo University, No. 818 Fenghua Road, Jiangbei District, 315211 Ningbo, P. R. China;
Email address: chenchuanqiang@nbu.edu.cn\\ \\
(Jiahuan Li) Department of Mathematics, University of Science and Technology of China, Hefei, 230026, Anhui Province, China.\;
Email address: jiahuan@mail.ustc.edu.cn\\ \\
(Xi-Nan Ma) Department of Mathematics, University of Science and Technology of China, Hefei, 230026, Anhui Province, China.\;
Email address: xinan@ustc.edu.cn\\ \\

}

\end{document}